\newtheorem{theorem}{Theorem}
\newtheorem{proposition}[theorem]{Proposition}
\theoremstyle{definition}
\theoremstyle{remark}
\newtheorem{remark}[theorem]{Remark}
\numberwithin{theorem}{section}
\numberwithin{equation}{section}
\numberwithin{table}{section}
\numberwithin{figure}{section}
\newcommand{\V}{\ensuremath{\mathcal{V}}}
\def\N{\mathbb{N}}
\def\R{\mathbb{R}}
\def\eps{\varepsilon}
\definecolor{myBlue1}{RGB}{101,149,239}  
\definecolor{myBlue2}{RGB}{113,104,238} 
\definecolor{myBlue3}{RGB}{30,144,255} 
\definecolor{myGreen1}{RGB}{154,204,50} 
\definecolor{myGreen2}{RGB}{69,169,0} 
\definecolor{myGreen3}{RGB}{154,205,50} 
\definecolor{myGreen4}{RGB}{105,139,34} 
\definecolor{myRed1}{RGB}{210,105,30} 
\definecolor{myRed2}{RGB}{165,42,42} 
\definecolor{myRed3}{RGB}{139,26,26} 
\definecolor{lightgray}{RGB}{175,175,175} 
\definecolor{myLGray}{RGB}{225,225,225} 
\definecolor{mycolor0}{rgb}{0.66,0.66,0.66}
\definecolor{mycolor4}{rgb}{0.00000,0.44700,0.74100}
\definecolor{mycolor1}{rgb}{0.85000,0.32500,0.09800}
\definecolor{mycolor2}{rgb}{0.92900,0.69400,0.12500}
\definecolor{mycolor3}{rgb}{0.67000,0.74700,0.14100}
\definecolor{mycolor5}{rgb}{0.49400,0.18400,0.55600}
\definecolor{mycolor6}{rgb}{0.85000,0.32500,0.09800}%
\newcommand{\calB}{\ensuremath{\mathcal{B}} }
\newcommand{\calK}{\ensuremath{\mathcal{K}} }
\newcommand{\calP}{\ensuremath{\mathcal{P}} }
\newcommand{\calQ}{\ensuremath{\mathcal{Q}} }
\newcommand{\calV}{\ensuremath{\mathcal{V}} }
\def\dx{\,\text{d}x}
\newcommand{\ddt}{\ensuremath{\frac{\text{d}}{\text{d}t}} }
\newcommand{\calVu}{\calV} 
\newcommand{\calVw}{\calV} 
\newcommand{\calVp}{\calP} 
\begin{document}
\title[Dissipation-preserving discretization of the Cahn--Hilliard equation]{Dissipation-preserving discretization of the\\ Cahn--Hilliard equation with dynamic\\ boundary conditions}
\author[]{R.~Altmann$^\dagger$, C.~Zimmer$^{\dagger}$}
\address{${}^{\dagger}$ Department of Mathematics, University of Augsburg, Universit\"atsstr.~14, 86159 Augsburg, Germany}
\email{robert.altmann@math.uni-augsburg.de, christoph.zimmer@math.uni-augsburg.de}
\thanks{The authors acknowledge the support of the Deutsche Forschungsgemeinschaft (DFG, German Research Foundation) through the project 446856041.}
\date{\today}
\keywords{}
\begin{abstract}
This paper deals with time stepping schemes for the Cahn--Hilliard equation with three different types of dynamic boundary conditions. The proposed schemes of first and second order are mass-conservative and energy-dissipative and -- as they are based on a formulation as a coupled system of partial differential equations -- allow different spatial discretizations in the bulk and on the boundary. The latter enables refinements on the boundary without an adaptation of the mesh in the interior of the domain. The resulting computational gain is illustrated in numerical experiments.
\end{abstract}
%
%
\maketitle
%
{\tiny {\bf Key words.} Cahn--Hilliard equation, dynamic boundary conditions, PDAE, dissipation-preserving }\\
\indent
{\tiny {\bf AMS subject classifications.}  {\bf 35G31}, {\bf 65J15}, {\bf 65M12}} 
%
%
%
\section{Introduction}
Renowned mathematical models describing the phase separation of binary mixtures include the Allen--Cahn as well as the here considered Cahn--Hilliard equation~\cite{CahH58,EllS86}. Originally proposed in the field of material science, nowadays, the Cahn--Hilliard equation is successfully applied in several physical areas, e.g., to model electrokinetic phenomena by a coupling with the Navier--Stokes equations~\cite{CamGK12}.

As we deal with partial differential equations, boundary conditions are needed to complete the system. The simplest model considers (homogeneous) Neumann boundary conditions for the phase-field variable as well as for the chemical potential.  
In recent years, however, rising attention has been attracted by a new class of boundary conditions that properly reflect effective properties on the surface of the domain. These so-called {\em dynamic boundary conditions} are itself a differential equation that incorporate an energy on the surface. In~\cite{KenEMetal01} it is proposed to use an Allen--Cahn equation on the boundary, whereas a model for non-permeable walls was suggested in~\cite{GolMS11}. More recently, a new model accounting for possible short-range interactions of the material with the solid wall was introduced in~\cite{LiuW19} and further analyzed in~\cite{GarK20}. A combination of the latter two models is considered in~\cite{KnoLLM21}.

Concerning the numerical treatment of the Cahn--Hilliard equation, we focus in this paper on the temporal discretization. For results on the spatial discretization, we refer to \cite{ChePP10,CheP14,HarK21,KnoLLM21} and the references therein. 
The use of a convex--concave splitting of the nonlinearity in the context of the Cahn--Hilliard equation was already proposed in~\cite{Eyr98}. Later, it was further applied to different types of boundary conditions; see, e.g., \cite{Gru13,GuaWW14}. 
Yet another approach is to treat the nonlinearity {\em explicitly}. This, however, often comes at the price of an additional stabilization parameter, which depends on the solution itself~\cite{BaoZ21}. Hence, a large stabilization parameter is necessary in theory, which in turn yields inaccurate numerical results. For recent findings on the stability in combination with \emph{large} time steps, we refer to~\cite{Li21}. A convergence analysis of such an implicit--explicit scheme without a stabilization term (in combination with standard boundary conditions) is given in~\cite{LiQT22}. Here, however, a time step restriction is necessary. 
Finally, in the context of systems with dynamic boundary conditions (but not in connection with Cahn--Hilliard) we would like to mention existing splitting approaches, which use similar ideas as in this paper. Therein, the aim is to decouple bulk and surface dynamics, leading to more efficient time stepping schemes~\cite{KovL17,AltV21,AltKZ22}. \smallskip 

Within this paper, we consider the Cahn--Hilliard equation with three different types of dynamic boundary conditions. For all three models, we present in Section~\ref{sec:CHmodel} an alternative weak formulation as a {\em partial differential-algebraic equation} (PDAE); see~\cite{LamMT13,Alt15} for an introduction. These formulations are characterized by the fact that additional variables are introduced on the boundary. As a result, we consider the Cahn--Hilliard equation in the bulk and the boundary conditions as two systems which are coupled through certain constraints acting only on the boundary. This then enables more flexibility for the spatial as well as the temporal discretization. At the same time, the models maintain the crucial properties of mass-conservation and energy-dissipation. 

Section~\ref{sec:firstOrder} is devoted to first-order time stepping schemes which are dissipation-preserving. Here, we follow the already mentioned strategy of a convex--concave splitting of the nonlinearity. Treating the convex part implicitly and the concave part explicitly, we show for all models that the property of being energy-dissipative is maintained after discretization. Moreover, since the schemes are based on a PDAE formulation, an additional refinement on the boundary -- in time and space -- is possible without refining the considered mesh in the bulk. This flexibility is of particular value if the solution oscillates rapidly on the boundary as we illustrate in the numerical experiments of Section~\ref{sec:firstOrder:numerics}. 
Second-order time stepping schemes of Crank--Nicolson type are then discussed in Section~\ref{sec:secondOrder}. Here, we restrict ourselves to the classical double-well potential. Again, we prove that the discretization maintains the property of being energy-dissipative, still allowing the flexible use of spatial discretization schemes. 
Finally, we conclude in Section~\ref{sec:conclusion}. 
%
%
\section{Abstract Formulation of the Cahn--Hilliard Equation} 
\label{sec:CHmodel} 
The Cahn--Hilliard equation in a bounded spatial domain $\Omega \subset \R^d$, $d\in\{2, 3\}$, with time horizon $T<\infty$ is given by 
\begin{subequations}
\label{eqn:CahnHilliard_bulk}
\begin{alignat}{3}
	\dot u - \sigma\Delta w 
	&= 0  &&\qquad\text{in } \Omega \times [0,T],\label{eqn:CahnHilliard_bulk_a} \\ 
	- \eps\, \Delta u + \eps^{-1} W^\prime(u) 
	&= w &&\qquad \text{in } \Omega \times [0,T] \label{eqn:CahnHilliard_bulk_b}
\end{alignat}
\end{subequations}
with an initial condition~$u(0) = u^0$. 
Therein, $u$ equals the \emph{phase-field parameter} with codomain $[-1,1]$ and represents the local relative concentration for the two components. The values~$u = \pm 1$ correspond to a pure component, whereas values in $(-1,1)$ correspond to the transition of the mixture. The separation is driven by the \emph{chemical potential}~$w$. The constant $\eps>0$ is the so-called \emph{interaction length} and describes the thickness of the transition area of one component to another, whereas $\sigma>0$ is a dissipation parameter. Finally, $W$ denotes the \emph{potential} with typical examples including the polynomial or logarithmic double-well potential.  

For the completion of system~\eqref{eqn:CahnHilliard_bulk}, we need boundary conditions for $u$ and $w$ on $\Gamma\coloneqq \partial \Omega$. Besides standard Neumann boundary conditions, we consider three different types of dynamic boundary conditions in the sequel. For all four cases, we present an abstract operator formulation and discuss the conservation of mass and the dissipation of energy. In Section~\ref{sec:CHmodel:illustration}, we provide an illustrative comparison of the four models. 
%
%
\subsection{Homogeneous Neumann boundary conditions}\label{sec:CHmodel:Neumann} 
A classical choice for the completion of the Cahn--Hilliard system~\eqref{eqn:CahnHilliard_bulk} are homogeneous Neumann conditions, i.e.,
\begin{subequations}
\label{eqn:CahnHilliard_boundary_Neumann}
\begin{alignat}{3}
	\partial_n w &= 0 &&\qquad \text{on } \Gamma \times [0,T], \label{eqn:CahnHilliard_boundary_Neumann_a}\\ 
	\partial_n u &= 0 &&\qquad \text{on } \Gamma \times [0,T].  \label{eqn:CahnHilliard_boundary_Neumann_b}	
\end{alignat}
\end{subequations}
This corresponds to the physical interpretation that the material and the surrounding wall do not interact. 
%
These homogeneous boundary conditions directly imply the conservation of mass. To see this, we integrate by parts and apply~\eqref{eqn:CahnHilliard_boundary_Neumann_a}, leading to 
\[
  \ddt \int_\Omega u \dx 
  = \int_\Omega \dot u \dx 
  = \sigma \int_\Omega \Delta w \dx 
  = \sigma \int_{\Gamma} \partial_n w \dx 
  = 0. 
\]
%
On the other hand, the \emph{bulk free energy} is defined by
\begin{equation}
\label{eqn:CahnHilliard_energy_bulk}
  E_{\text{bulk}}(u)
  \coloneqq \int_\Omega \frac \eps 2\, \nabla u \cdot \nabla u + \frac 1 \eps\, W(u) \dx
\end{equation}
and is dissipative. More precisely, we have 
\[
  \ddt E_{\text{bulk}}(u) 
  = \int_\Omega \eps\, \nabla u \cdot \nabla \dot u + \eps^{-1} W^\prime(u)\, \dot u \dx 
  \stackrel{\eqref{eqn:CahnHilliard_bulk_b}}{=} \int_\Omega w\, \dot u \dx \stackrel{\eqref{eqn:CahnHilliard_bulk_a}}{=} - \sigma \int_\Omega \nabla w \cdot \nabla w \dx \leq 0.
\]

Finally, we would like to introduce an abstract operator formulation, which corresponds to the weak formulation of the system. For this, we introduce the trial space $\calV \coloneqq H^1(\Omega)$ for both variables $u$ and $w$. 
\begin{remark}
\label{rem:VuVw}
For the spatial discretization, it may be of interest to distinguish the trial spaces for $u$ and $w$ in order to allow different discretization schemes. This, however, is not the focus of this work. 
\end{remark}
Moreover, we introduce the differential operator~$\calK_\Omega\colon \calVu\to \calVu^\ast$ by 
\[
  \langle \calK_\Omega u, v \rangle 
  \coloneqq \int_\Omega \nabla u \cdot \nabla v \dx.
\]
Then, integrating by parts and applying the homogeneous Neumann boundary conditions, system~\eqref{eqn:CahnHilliard_bulk} can be written as the PDAE 
\begin{subequations}
\label{eqn:opEqn:Neumann}
\begin{alignat}{3}
  \dot u + \sigma \calK_\Omega w 
  &= 0 &&\qquad \text{in } \calVw^\ast,\\
  \eps\, \calK_\Omega u + \eps^{-1} W^\prime(u) 
  &= w &&\qquad \text{in } \calVu^\ast.
\end{alignat}
\end{subequations}
Note that both equations are stated in the dual space of $\calV$ (indicating the space of test functions) and that they should hold for a.e.~$t\in[0,T]$. Further note that system~\eqref{eqn:opEqn:Neumann} is indeed a PDAE as only the first equation contains a time derivative and the second equation yields an algebraic equation after spatial discretization. Moreover, we would like to emphasize that the conservation of mass and the dissipation of energy also hold for this weak formulation. Here, these two properties read 
\[
  \ddt (u, 1)_{L^2(\Omega)}
  = - \sigma \langle \calK_\Omega w, 1\rangle
  = 0, \qquad
  \ddt E_{\text{bulk}}(u) 
  = (w, \dot u)_{L^2(\Omega)}
  = - \langle \calK_\Omega w, w \rangle 
  \le 0. 
\]

For many applications, the non-interaction of the material with the wall due to the homogeneous Neumann boundary conditions~\eqref{eqn:CahnHilliard_boundary_Neumann} is rather restrictive. In order to describe short-range interactions between the solid wall and the mixture, physicists introduced a suitable surface free energy functional and more complex boundary conditions. 
%
%
\subsection{Allen--Cahn type boundary conditions}\label{sec:CHmodel:AllenCahn} 
As a first example of more complex boundary conditions, we consider the model of Allen--Cahn type suggested by Kenzler et al., cf.~\cite{KenEMetal01}, given by
\begin{subequations}
	\label{eqn:CahnHilliard_boundary_AC}
	\begin{alignat}{3}
	\partial_n w 
	&= 0 &&\qquad \text{on } \Gamma \times [0,T], \label{eqn:CahnHilliard_boundary_AC_a}\\
	\dot u - \delta \kappa\, \Delta_\Gamma u + \delta^{-1} W^\prime_\Gamma(u) + \eps\, \partial_n u 
	&= 0 &&\qquad \text{on } \Gamma \times [0,T]. \label{eqn:CahnHilliard_boundary_AC_b}	
	\end{alignat}
\end{subequations}
Note that the chemical potential $w$ still does not interact with the solid wall. Nevertheless, the binary mixture separates on the boundary, where the separation is described by the Allen--Cahn equation acting on the manifold given by the boundary. The parameter~$\delta$ denotes the interaction length on the boundary (similar to $\eps$ in $\Omega$), $W_\Gamma$ equals the \emph{boundary energy potential}, and $\Delta_\Gamma$ is the \emph{Laplace--Beltrami operator}~\cite[Ch.~16.1]{GilT01} with corresponding dissipation parameter~$\kappa>0$. 

Since the boundary condition for $w$ is the same as in the Neumann case in~\eqref{eqn:CahnHilliard_boundary_Neumann_a}, we can derive the same calculation to prove conservation of mass. Note, however, that the mass on~$\Gamma$ is not conserved, in general. This can also be observed in numerical simulations.
%
We now turn to the energy of the system. Besides the bulk free energy introduced in~\eqref{eqn:CahnHilliard_energy_bulk}, the model also involves a \emph{surface free energy}, namely 
\[
  E_{\text{surf}}(u) 
  \coloneqq \int_\Gamma \frac {\delta \kappa} 2\, \nabla_\Gamma u \cdot \nabla_\Gamma u + \frac 1 \delta\, W_\Gamma(u) \dx.
\]
Hence, the total energy is given by $E(u) \coloneqq E_{\text{bulk}}(u) + E_{\text{surf}}(u)$, 
%
%
which is again dissipative. To see this, a similar calculation as in the previous section shows with~\eqref{eqn:CahnHilliard_bulk} and~\eqref{eqn:CahnHilliard_boundary_AC_b} that 
\begin{equation*}
  \ddt E(u) 
  = - \sigma \int_\Omega \nabla w \cdot \nabla w \dx - \int_\Gamma \dot{u}^2 \dx 
  \leq 0.
\end{equation*}

To obtain an abstract operator formulation, which is suitable for numerical simulations and which enables a separate treatment of the boundary dynamics, we follow the procedure of~\cite{Alt19,AltKZ22}. For this, we introduce an auxiliary variable $p\coloneqq u|_\Gamma$ and the trace spaces
\[
  \calVp \coloneqq H^1(\Gamma), \qquad 
  \calQ \coloneqq H^{-1/2}(\Gamma).
\]
With the typical trace operator~$\calB\colon \calVu = H^1(\Omega)\to H^{1/2}(\Gamma) = \calQ^\ast$, the connection of~$u$ and~$p$ can be expressed in the form $\calB u - p = 0$ as equation in~$\calQ^\ast$. We regard this equation as a constraint, which we include by the help of a Lagrange multiplier~$\lambda$. 
Moreover, we introduce the differential operator~$\calK_\Gamma\colon\calVp\to\calVp^\ast$ by
\[
  \langle \calK_\Gamma p, q \rangle 
  \coloneqq \int_\Gamma \nabla_\Gamma p \cdot \nabla_\Gamma q \dx.
\]

The resulting operator formulation of~\eqref{eqn:CahnHilliard_bulk} with the Allen--Cahn type boundary conditions~\eqref{eqn:CahnHilliard_boundary_AC} then leads to the following PDAE: 
seek $u, w\colon [0,T] \to \calV$, $p\colon [0,T] \to \calVp$, and $\lambda\colon [0,T] \to \calQ$ such that 
\begin{subequations}
\label{eqn:opEqn:AllenCahn}
\begin{alignat}{3}
  \dot u + \sigma \calK_\Omega w &= 0 &&\qquad \text{in } \calVw^\ast,\\
  \eps\, \calK_\Omega u + \eps^{-1} W^\prime(u) - \eps\, \calB^\ast \lambda &= w &&\qquad \text{in } \calVu^\ast, \\
  \dot p + \delta \kappa\, \calK_\Gamma p + \delta^{-1} W^\prime_\Gamma(p) + \eps\,  \lambda &= 0 &&\qquad \text{in } \calVp^\ast, \\
  \calB u - p  &= 0 &&\qquad \text{in } \calQ^\ast \label{eqn:opEqn:AllenCahn:d}
\end{alignat}
\end{subequations}
for a.e.~$t\in[0,T]$. As initial data, we expect given~$u(0)=u^0$ and $p(0)=p^0$, which we call \emph{consistent} if $u^0|_\Gamma = p^0$, i.e., if the initial data satisfies the constraint~\eqref{eqn:opEqn:AllenCahn:d}. 
Again, the introduced conservation and dissipation properties can be shown for this weak formulation, leading to  
\[
  \ddt E(u)
  = (w, \dot u)_{L^2(\Omega)} - (\dot p, \dot p)_{L^2(\Gamma)}
  + \eps \langle \calB \dot u - \dot p, \lambda\rangle
  = - \sigma\langle \calK_\Omega w, w\rangle - (\dot p, \dot p)_{L^2(\Gamma)} 
  \le 0.
\]
\begin{remark}
A spatial discretization of system~\eqref{eqn:opEqn:AllenCahn} (as well as the following systems~\eqref{eqn:opEqn:LiuWu} and~\eqref{eqn:opEqn:Goldstein}) yields a differential-algebraic equation of index~$2$, cf.~\cite{HaiW96}. In general, such a system of {\em higher index} leads to numerical instabilities in terms of the temporal discretization. Here, however, the constraint~\eqref{eqn:opEqn:AllenCahn:d} has a homogeneous right-hand side such that its derivatives can be computed in an exact manner. As a result, no numerical difficulties occur in the presence of consistent initial data; cf.~\cite[p.~33]{HaiLR89}. 
\end{remark}
In the following two subsections, we consider boundary conditions of Cahn--Hilliard type. In order to distinguish the two models, we address them by the names of the authors, who original introduced them. 
%
%
\subsection{Boundary conditions of Liu and Wu}\label{sec:CHmodel:LiuWu} 
As a first example of boundary conditions of Cahn--Hilliard type, we consider the model derived by Liu and Wu; see~\cite{LiuW19}. Their construction is driven by physical properties, namely conservation of mass, dissipation of energy, and force balance and reads 
\begin{subequations}
\label{eqn:CahnHilliard_boundary_LW}
\begin{alignat}{3}
	\partial_n w 
	&= 0 &&\qquad \text{on } \Gamma \times [0,T], \label{eqn:CahnHilliard_boundary_LW_a}\\
	\dot u - \Delta_\Gamma w_{\Gamma} 
	&= 0 &&\qquad \text{on } \Gamma \times [0,T], \label{eqn:CahnHilliard_boundary_LW_b}\\ 
	- \delta \kappa\, \Delta_\Gamma u + \delta^{-1} W^\prime_\Gamma(u) + \eps\, \partial_n u 
	&= w_{\Gamma} &&\qquad \text{on } \Gamma \times [0,T]. \label{eqn:CahnHilliard_boundary_LW_c}
\end{alignat}
\end{subequations}
Here, $u$ and $w$ are the traces of the associated bulk-states. The \emph{boundary chemical potential} $w_{\Gamma}$, however, is a new independent state, i.e., we do not assume that $w_\Gamma$ equals $w$ on the boundary.  

With the same arguments as before, one shows that the mass in the bulk is constant. 
Moreover, using that $\Gamma$ is 'periodic', we have in addition 
\begin{equation*}
  \ddt \int_\Gamma u \dx  
  = \int_\Gamma \dot u \dx 
  \stackrel{\eqref{eqn:CahnHilliard_boundary_LW_b}}{=} \int_\Gamma \Delta_\Gamma w \dx 
  =0.
\end{equation*} 
Hence, the given model conserves the mass in the bulk as well as on the boundary. 
%
The energy corresponding to~\eqref{eqn:CahnHilliard_bulk} with boundary conditions~\eqref{eqn:CahnHilliard_boundary_LW} is again given by $E(u) = E_{\text{bulk}}(u) + E_{\text{surf}}(u)$. Similar to the calculation in Section~\ref{sec:CHmodel:Neumann} for the Neumann case, one shows energy-dissipation of the form 
\begin{equation*}
  \ddt E(u) 
  = - \sigma \int_\Omega \nabla w \cdot \nabla w \dx -\int_\Gamma \nabla_\Gamma w_\Gamma \cdot \nabla_\Gamma w_\Gamma \dx 
  \leq 0.
\end{equation*}

For the abstract formulation, we follow the procedure of the previous section and introduce the auxiliary variable~$p\coloneqq u|_\Gamma$. 
Here, we consider the same trial space for $p$ and $w_\Gamma$. Note that this may be generalized in order to allow different spatial discretizations, cf.~Remark~\ref{rem:VuVw}. This then leads to the following abstract formulation:  
seek $u, w\colon [0,T] \to \calV$, $p, w_\Gamma\colon [0,T] \to \calVp$, and $\lambda\colon [0,T] \to \calQ$ such that 
\begin{subequations}
\label{eqn:opEqn:LiuWu}
\begin{alignat}{3}
  \dot u + \sigma \calK_\Omega w 
  &= 0 &&\qquad \text{in } \calVw^\ast,\\
  \eps\, \calK_\Omega u + \eps^{-1} W^\prime(u) - \eps\, \calB^\ast \lambda 
  &= w &&\qquad \text{in } \calVu^\ast, \\
  \dot p + \calK_\Gamma w_\Gamma 
  &= 0 &&\qquad \text{in } \calVp^\ast, \\
  \delta\kappa\, \calK_\Gamma p + \delta^{-1} W^\prime_\Gamma(p) + \eps\, \lambda 
  &= w_\Gamma &&\qquad \text{in } \calVp^\ast, \\
  \calB u - p 
  &= 0 &&\qquad \text{in } \calQ^\ast
\end{alignat}
\end{subequations}
for a.e.~$t\in[0,T]$ and prescribed initial data $u(0)=u^0$ and $p(0)=p^0$. Again, the mentioned conservation and dissipation properties are maintained for the weak formulation~\eqref{eqn:opEqn:LiuWu}. 
%
%
%
%
\subsection{Boundary conditions of Goldstein, Miranville, and Schimpera}\label{sec:CHmodel:Goldstein} 
The dynamic boundary conditions introduced in~\cite{GolMS11} model non-permeable walls and read 
\begin{subequations}
\label{eqn:CahnHilliard_boundary_GMS}
\begin{alignat}{3}
	\dot u - \Delta_\Gamma w + \sigma \partial_n w 
	&= 0 &&\qquad \text{on } \Gamma \times [0,T], \label{eqn:CahnHilliard_boundary_GMS_a}\\ 
	- \delta \kappa\, \Delta_\Gamma u + \delta^{-1} W^\prime_\Gamma(u) + \eps\, \partial_n u 
	&= w &&\qquad \text{on } \Gamma \times [0,T]. \label{eqn:CahnHilliard_boundary_GMS_b}	
\end{alignat}
\end{subequations}
Note that, here, $u$ and $w$ denote the variables from the bulk restricted to the boundary. As in all previous examples, we discuss the change of mass and energy over time. 

For the mass, we obtain
\begin{equation*}
  \ddt \int_\Omega u \dx + \ddt \int_\Gamma u \dx 
  = \sigma \int_\Omega \Delta w \dx + \int_\Gamma \Delta_\Gamma w  - \sigma\,\partial_n w \dx 
  = \sigma\int_\Gamma \partial_n w - \partial_n w \dx 
  = 0.
\end{equation*}
This means that the sum of the masses in $\Omega$ and on $\Gamma$ is conserved. In contrast to the model of Section~\ref{sec:CHmodel:LiuWu}, however, the single masses may change over time, which can also be observed numerically. 
%
The total energy, which again consists of the bulk and surface free energy, satisfies 
\begin{equation*}
  \ddt E(u) 
  = - \sigma\int_\Omega \nabla w \cdot \nabla w \dx -\int_\Omega \nabla_\Gamma w \cdot \nabla_\Gamma w \dx 
  \leq 0.
\end{equation*}

For the derivation of the weak formulation of~\eqref{eqn:CahnHilliard_bulk} with boundary conditions~\eqref{eqn:CahnHilliard_boundary_GMS} as PDAE, we need to introduce two additional variables on the boundary (for $u|_\Gamma$ and $w|_\Gamma$) and two Lagrange multipliers. 
Hence, we seek for $u, w\colon [0,T] \to \calV$, $p, r\colon [0,T] \to \calVp$, and $\lambda, \mu \colon [0,T] \to \calQ$ such that 
\begin{subequations}
\label{eqn:opEqn:Goldstein}
\begin{alignat}{3}
  \dot u + \sigma\calK_\Omega w - \calB^\ast \mu  
  &= 0 &&\qquad \text{in } \calVw^\ast,\\
  \eps\, \calK_\Omega u + \eps^{-1} W^\prime(u) - \eps\, \calB^\ast \lambda
     &= w &&\qquad \text{in } \calVu^\ast, \\
  \dot p + \calK_\Gamma r + \mu 
  &= 0 &&\qquad \text{in } \calVp^\ast, \\
  \delta \kappa\, \calK_\Gamma p + \delta^{-1} W^\prime_\Gamma(p) + \eps\, \lambda
  &= r &&\qquad \text{in } \calVp^\ast, \\
  \calB u - p 
  &= 0 &&\qquad \text{in } \calQ^\ast, \\
  \calB w - r 
  &= 0 &&\qquad \text{in } \calQ^\ast
\end{alignat}
\end{subequations}
for a.e.~$t\in[0,T]$ and given $u(0)=u^0$, $p(0)=p^0$. As before, this formulation leads to the same conservation and dissipation properties as above. 
%
%
\subsection{Numerical illustration of the different models}\label{sec:CHmodel:illustration} 
We would like to close this section with an illustration of the four types of boundary conditions considered in this paper. First, we summarize the properties such as conservation of mass and dissipation of energy in Table~\ref{tab:conseration}. 
\begin{table}[t]
\caption{Mass-conservation and energy-dissipation properties for the four types of (dynamic) boundary conditions.}
\label{tab:conseration}
\begin{tabular}{l|ccc|ccc}
	boundary conditions & $\int_\Omega u \dx$ & $\int_\Gamma u \dx$ & total mass &$E_{\text{bulk}}$ & $E_{\text{surf}}$ & total energy \\ \hline\hline
	Neumann & yes & -- & -- & yes & -- & -- \\ \hline
	Allen--Cahn type & yes & no & no & no & no & yes	\\[0.2em] 
	Liu--Wu & yes & yes & yes & no & no & yes \\[0.2em] 
	Goldstein et al. & no & no & yes & no & no & yes 
\end{tabular}
\end{table}
Second, we consider a simple test case on the unit square to get a visual idea of the differences. Consider constants~$\eps=\delta=0.02$, $\sigma=\kappa = 1$ and initial data 
\begin{equation*}
  u_0(x,y) 
  = \cos(4 \pi x)\cos(4 \pi y).
\end{equation*}
The simulation results at the final time $T=0.001$ are given in Figure~\ref{fig:comparisonBC}, showing significant differences of the mixture. All computations use the same $P_1$ finite element discretization on a uniform (criss-cross) grid with mesh size $h=0.01$ and time step size~$\tau=T/100$. 
\begin{figure}
\includegraphics[scale=0.5]{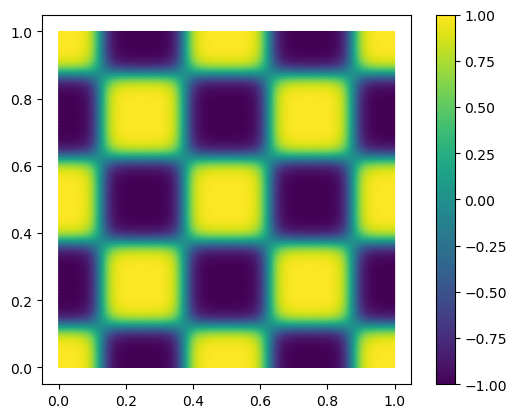}%
\quad
\includegraphics[scale=0.5]{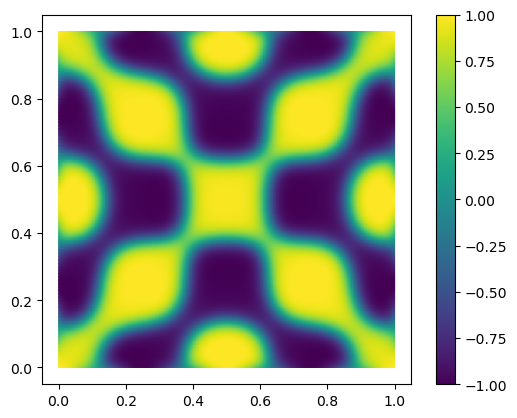}\\
\includegraphics[scale=0.5]{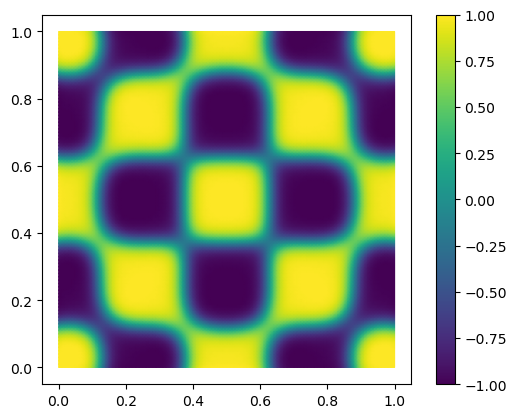}%
\quad
\includegraphics[scale=0.5]{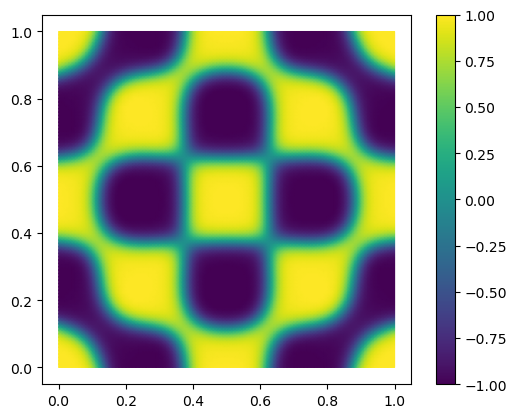}%
\caption{Numerical solutions of the example discussed in Section~\ref{sec:CHmodel:illustration} at time $t=0.001$ with Neumann (top left), Allen--Cahn type (top right), Liu--Wu type (bottom left), and Goldstein--Miranville--Schimpera type boundary conditions (bottom right).}
\label{fig:comparisonBC}
\end{figure}
%
%
\section{Dissipation-preserving Discretization of First Order}\label{sec:firstOrder} 
The application of the implicit (or explicit) Euler scheme to one of the Cahn--Hilliard systems from the previous section will, in general, not preserve the proven energy-dissipation. To achieve such a property, we use a decomposition of the nonlinearity into its convex and concave part as suggested in~\cite{Eyr98}. 
Given a function~$f$, we write
\begin{align}
\label{eqn:convexconcaveSplitting}
f(x) 
= f_+(x) - f_-(x)
\end{align}
with~$f_+$ and $f_-$ being convex and, hence, $-f_-$ being concave. Note that this splitting always exists if the Hessian of~$f$ is uniformly bounded
\cite[Th.~1]{YulR03}. The central property for the upcoming proofs reads 
\begin{align}
\label{eqn:propConvexity}
  f_+(y) - f_+(x) \ge f_+'(x)(y-x)
\end{align}
for all $x,y$ in the domain of the convex function~$f_+$. 

Within this section, we focus on the temporal discretization of the introduced PDAEs with constant step size~$\tau$. Hence, we consider a semi-discretization in time only. An additional spatial discretization (e.g.~using finite elements) is straight-forward, since the considered operator formulations correspond to the weak formulation of the system.   
Throughout the proofs, we will make use of the property
\[
  2\, \mathfrak{a}(u, u-v)
  = \Vert u \Vert^2_\mathfrak{a} - \Vert v \Vert^2_\mathfrak{a} + \Vert u- v \Vert^2_\mathfrak{a}
\]
for arbitrary symmetric bilinear forms~$\mathfrak{a}$ and~$\Vert v \Vert^2_\mathfrak{a} \coloneqq \mathfrak{a}(v, v)$. Moreover, we write~$\|\cdot\| \coloneqq \|\cdot\|_{L^2(\Omega)}$ and $\|\cdot\|_\Gamma \coloneqq \|\cdot\|_{L^2(\Gamma)}$ for the respective $L^2$-norms in $\Omega$ and on~$\Gamma$. 

Before we deal with dynamic boundary conditions, we would like to comment on the situation for homogeneous Neumann boundary conditions. Considering the PDAE~\eqref{eqn:opEqn:Neumann} and a convex--concave splitting of the potential~$W$, we obtain the time stepping scheme 
\begin{subequations}
\label{eqn:firstOrder:Neumann}
\begin{alignat}{3}
  u^{n+1} + \tau\, \sigma \calK_\Omega w^{n+1} 
  &= u^n &&\qquad \text{in } \calVw^\ast, \label{eqn:firstOrder:Neumann:a}\\
  \eps\, \calK_\Omega u^{n+1} + \eps^{-1} \big[ W_+^\prime(u^{n+1}) - W_-^\prime(u^n) \big] 
  &= w^{n+1} &&\qquad \text{in } \calVu^\ast \label{eqn:firstOrder:Neumann:b}
\end{alignat}
\end{subequations}
with a given starting value~$u^0\in \V$. Note that this equals the implicit Euler scheme with the modification that the derivative of the concave part of $W$ is handled explicitly. For this scheme and a sufficiently smooth potential~$W$, one can show first-order accuracy and energy-dissipation, i.e., 
\[
  E_{\text{bulk}}(u^{n})
  \ge E_{\text{bulk}}(u^{n+1})
\]
for all $n\ge 0$. 
To see the latter, one considers test functions~$w^{n+1}\in\V$ in equation~\eqref{eqn:firstOrder:Neumann:a} and $u^{n+1}-u^n\in\V$ in~\eqref{eqn:firstOrder:Neumann:b}. Property~\eqref{eqn:propConvexity}, which reads here 
\begin{align*}
  W_+^\prime(u^{n+1})\, (u^{n+1}-u^n)
  &\ge W_+(u^{n+1}) - W_+(u^{n}), \\
  W_-^\prime(u^{n})\, (u^{n+1}-u^n)
  &\le W_-(u^{n+1}) - W_-(u^{n}), 
\end{align*}
then leads to the claimed dissipativity of~$E_{\text{bulk}}$. 

In the following three subsections, we show that the convex--concave splitting is also applicable for non-standard boundary conditions. Moreover, we discuss the possibility of applying smaller time steps on the boundary. 
%
%
\subsection{Allen--Cahn type boundary conditions}\label{sec:firstOrder:AllenCahn} 
We turn to the dynamic boundary conditions of Allen--Cahn type introduced in Section~\ref{sec:CHmodel:AllenCahn}. Following the idea of the convex--concave splitting of the potential~$W$, the discretization of~\eqref{eqn:opEqn:AllenCahn} yields the system 
\begin{subequations}
\label{eqn:firstOrder:AllenCahn}
\begin{alignat}{3}
  u^{n+1} + \tau\, \sigma\calK_\Omega w^{n+1} 
  &= u^n &&\quad \text{in } \calVw^\ast, \label{eqn:firstOrder:AllenCahn:a}\\
  \eps\, \calK_\Omega u^{n+1} + \eps^{-1} \big[ W_+^\prime(u^{n+1}) - W_-^\prime(u^n) \big] - \eps\, \calB^\ast \lambda^{n+1} 
  &= w^{n+1} &&\quad \text{in } \calVu^\ast, \label{eqn:firstOrder:AllenCahn:b}\\
  p^{n+1} + \tau \delta \kappa\, \calK_\Gamma p^{n+1} + \tau\delta^{-1} \big[ W_{\Gamma,+}^\prime(p^{n+1}) - W_{\Gamma,-}^\prime(p^n) \big] + \tau\eps\,  \lambda^{n+1} 
  &= p^n &&\quad \text{in } \calVp^\ast, \label{eqn:firstOrder:AllenCahn:c}\\
  \calB u^{n+1} - p^{n+1}  
  &= 0 &&\quad \text{in } \calQ^\ast. \label{eqn:firstOrder:AllenCahn:d}
\end{alignat}
\end{subequations}
Corresponding (consistent) initial data is given by $u^0\in\calVu$ and $p^0\in\calVp$. Note that neither the chemical potential~$w$ nor the Lagrange multiplier~$\lambda$ need an initial value. 
\begin{proposition}
\label{prop:firstOrder:AllenCahn}
Assume consistent initial data, i.e., $p^0 = u^0|_\Gamma$. Then, the scheme~\eqref{eqn:firstOrder:AllenCahn} is first-order accurate and energy-dissipative, i.e., 
\[
  E(u^{n}) \ge E(u^{n+1})
\]
for all $n\ge 0$. 
\end{proposition}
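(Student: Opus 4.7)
The plan is to mimic the calculation sketched for the Neumann case and the continuous energy-dissipation identity from Section~\ref{sec:CHmodel:AllenCahn}, with the three test functions chosen so that telescoping in the symmetric bilinear forms combines with the convexity estimate~\eqref{eqn:propConvexity} to produce the bulk and surface energy differences on the left-hand side.

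Concretely, I would first insert $w^{n+1}\in\calV$ into~\eqref{eqn:firstOrder:AllenCahn:a}, $u^{n+1}-u^n\in\calV$ into~\eqref{eqn:firstOrder:AllenCahn:b}, and $p^{n+1}-p^n\in\calVp$ into~\eqref{eqn:firstOrder:AllenCahn:c} (the latter two after dividing~\eqref{eqn:firstOrder:AllenCahn:c} by~$\tau$ is not even needed if one keeps the $\tau$ factors around). The mixed pairing $(w^{n+1},u^{n+1}-u^n)$ appears in both the first and the second tested equation and can be eliminated, yielding one scalar identity involving $\tau\sigma\,\langle\calK_\Omega w^{n+1},w^{n+1}\rangle$, one involving $\|p^{n+1}-p^n\|_\Gamma^2/\tau$, and the nonlinear and diffusive bulk/surface terms.

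Next I would use the symmetric bilinear form identity recalled in the text on the two terms $\eps\,\langle\calK_\Omega u^{n+1}, u^{n+1}-u^n\rangle$ and $\delta\kappa\,\langle\calK_\Gamma p^{n+1}, p^{n+1}-p^n\rangle$ to convert them into differences of $\|\nabla u\|^2$ and $\|\nabla_\Gamma p\|_\Gamma^2$ (plus a nonnegative square-of-increment, which may simply be discarded). Simultaneously I would apply~\eqref{eqn:propConvexity} twice in the bulk and twice on the boundary to obtain
\[
  \big[W_+^\prime(u^{n+1})-W_-^\prime(u^n)\big](u^{n+1}-u^n)
  \ge W(u^{n+1})-W(u^n)
\]
and the analogous estimate on $\Gamma$. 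After division/multiplication by $\eps^{\pm 1}$, $\delta^{\pm 1}$, summation, these four pieces assemble precisely into $E_{\text{bulk}}(u^{n+1})-E_{\text{bulk}}(u^n)+E_{\text{surf}}(p^{n+1})-E_{\text{surf}}(p^n)$ on the left-hand side.

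The one remaining contribution is the Lagrange-multiplier pairing $\eps\,\langle \calB^\ast\lambda^{n+1},u^{n+1}-u^n\rangle - \eps\,\langle \lambda^{n+1}, p^{n+1}-p^n\rangle = \eps\,\langle \lambda^{n+1}, (\calB u^{n+1}-p^{n+1})-(\calB u^n-p^n)\rangle$. This is the step I expect to be the main obstacle, as it mirrors the continuous $\eps\,\langle\calB\dot u-\dot p,\lambda\rangle$ term. It vanishes because the constraint~\eqref{eqn:firstOrder:AllenCahn:d} holds at step~$n+1$ and, by the assumed consistency $p^0=u^0|_\Gamma$ together with an inductive application of~\eqref{eqn:firstOrder:AllenCahn:d}, also at step~$n$. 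Putting everything together then yields
\[
  E(u^{n+1})-E(u^n)
  \le -\tau\sigma\,\|\nabla w^{n+1}\|^2 -\tfrac{1}{\tau}\|p^{n+1}-p^n\|_\Gamma^2 \le 0,
\]
which is the claimed energy-dissipation. First-order accuracy finally follows from standard arguments for the implicit Euler scheme together with the observation that the IMEX evaluation of $W_-^\prime$ at the previous step introduces only an $\mathcal{O}(\tau)$ consistency error provided $W$ and the exact solution are sufficiently smooth.
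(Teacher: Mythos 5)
Your proposal is correct and follows essentially the same route as the paper's proof: the same three test functions $w^{n+1}$, $u^{n+1}-u^n$, and $\frac1\tau(p^{n+1}-p^n)$ (your version keeps the $\tau$ factors, which is equivalent), the symmetric-bilinear-form identity for the $\calK_\Omega$ and $\calK_\Gamma$ terms, the double application of the convexity property~\eqref{eqn:propConvexity} in the bulk and on the boundary, cancellation of the $\lambda$-pairing via the constraint~\eqref{eqn:firstOrder:AllenCahn:d} together with consistent initial data, and the same $\mathcal{O}(\tau)$-perturbation-of-implicit-Euler argument for first-order accuracy. No gaps; the only cosmetic difference is that the paper invokes $p^n=u^n|_\Gamma$ for all $n$ directly rather than telescoping the constraint inside the multiplier term.
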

\begin{proof}
The first-order accuracy of the method (for $W\in C^{2}(\R)$) follows from the fact that this equals the implicit Euler scheme up to the transition from~$W_-^\prime(u^{n+1})$ to $W_-^\prime(u^n)$, cf.~\cite[Th.~4.1]{ChePP10}. This, however, only depicts a perturbation of order~$\tau$.  	
We turn to the dissipativity property. By the consistency assumption and equation~\eqref{eqn:firstOrder:AllenCahn:d}, we know that $p^n = u^n|_\Gamma$ for all $n\ge0$. Now consider~\eqref{eqn:firstOrder:AllenCahn:a}-\eqref{eqn:firstOrder:AllenCahn:c} with test functions~$w^{n+1}$, $u^{n+1}-u^n$, and~$\frac1\tau(p^{n+1}-p^n) = \frac 1\tau (u^{n+1}-u^n)|_\Gamma$, respectively. The sum of these three equations gives 
\begin{align*}
0
&= \frac\eps2\, \big( \|\nabla u^{n+1}\|^2 - \|\nabla u^{n}\|^2 + \|\nabla (u^{n+1}-u^n)\|^2 \big) + \tau\sigma\, \| \nabla w^{n+1}\|^2 + \tau^{-1} \|p^{n+1} - p^n\|^2_\Gamma \\ 
&\quad + \frac{\delta\kappa}{2}\, \big( \|\nabla_\Gamma p^{n+1}\|_\Gamma^2 - \|\nabla_\Gamma p^{n}\|_\Gamma^2 + \|\nabla_\Gamma (p^{n+1}-p^n)\|_\Gamma^2 \big) \\
&\quad+ \eps^{-1} \big( W_+^\prime(u^{n+1}) - W_-^\prime(u^n) \big)\, (u^{n+1}-u^n) + \delta^{-1} \big( W_{\Gamma,+}^\prime(p^{n+1}) - W_{\Gamma,-}^\prime(p^n) \big)(p^{n+1}-p^n).
\end{align*}
Several applications of the convexity property~\eqref{eqn:propConvexity} then yields  
\begin{align*}
0
&\ge \frac\eps2\, \big( \|\nabla u^{n+1}\|^2 - \|\nabla u^{n}\|^2 \big) + \frac{\delta\kappa}{2}\, \big( \|\nabla_\Gamma p^{n+1}\|_\Gamma^2 - \|\nabla_\Gamma p^{n}\|_\Gamma^2 \big) \\*
&\hspace{3.3cm}+ \eps^{-1} \big( W(u^{n+1}) - W(u^{n}) \big) + \delta^{-1} \big( W_{\Gamma}(p^{n+1}) - W_{\Gamma}(p^n) \big) \\
&= E(u^{n+1}) - E(u^{n}),
\end{align*}
which completes the proof. 
\end{proof}
A numerical experiment validating the above result, is given in Section~\ref{sec:firstOrder:numerics}. Therein, we consider the situation where $\delta$ is smaller than $\eps$, calling for a fine discretization (in time and space) on the boundary. As we will discuss later on, the formulation as PDAE with the auxiliary variable $p$ allows to consider finer spatial discretizations on the boundary without any additional effort. Moreover, we show in the sequel how to implement a refined discretization in time as well. 

In order to allow a smaller time step size for the computation on the boundary, we introduce the parameter $\ell\in\N$ and consider time steps of size $\tau/\ell$; see the illustration in Figure~\ref{fig:boundaryRefinement}. 
%
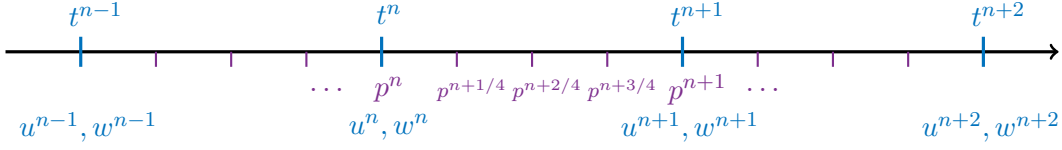
\begin{figure}
\centering
\begin{tikzpicture}[scale=1]¸
	\draw[very thick,black,->] (-1,0) -- (13,0);
	\foreach \x in {0,4,8,12} {		
		\draw[mycolor4,very thick] (\x, -0.2) -- (\x, 0.2);
	}
	\node[mycolor4] at (0.2,.5) {$t^{n-1}$};
	\node[mycolor4] at (4.1,.5) {$t^{n}$};
	\node[mycolor4] at (8.2,.5) {$t^{n+1}$};
	\node[mycolor4] at (12.2,.5) {$t^{n+2}$};	
    \node[mycolor5] at (3.3,-.5) {$\dots$};
	\node[mycolor5] at (4.1,-.5) {$p^{n}$};
	\node[mycolor5] at (8.2,-.5) {$p^{n+1}$};
    \node[mycolor5] at (9.1,-.5) {$\dots$};
	\node[mycolor4] at (0.1,-1) {$u^{n-1}, w^{n-1}$};
	\node[mycolor4] at (4.1,-1) {$u^{n}, w^n$};
	\node[mycolor4] at (8.1,-1) {$u^{n+1}, w^{n+1}$};
	\node[mycolor4] at (12.1,-1) {$u^{n+2}, w^{n+2}$};	
	\foreach \x in {1,2,3,5,6,7,9,10,11} {		
		\draw[mycolor5,thick] (\x, -0.2) -- (\x, 0);
	}
	\node[mycolor5] at (5.2,-.5) {\scriptsize $p^{n+1/4}$};
	\node[mycolor5] at (6.2,-.5) {\scriptsize $p^{n+2/4}$};
	\node[mycolor5] at (7.2,-.5) {\scriptsize $p^{n+3/4}$};
\end{tikzpicture}
\caption{Illustration of the time mesh for $\ell=4$. The variables $u$ and $w$ are only computed at the time points $t^n=n\tau$, whereas $p$ on the boundary is computed on a refined time grid with step size $\tau/\ell$. }
\label{fig:boundaryRefinement}
\end{figure}  
Then, we replace~\eqref{eqn:firstOrder:AllenCahn:c} by the~$\ell$ equations
\begin{align}
  p^{n+\frac{j}{\ell}} + \tfrac \tau \ell\, \delta \kappa\, \calK_\Gamma p^{n+\frac{j}{\ell}} + \tfrac \tau \ell\, \delta^{-1} \big[ W_{\Gamma,+}^\prime(p^{n+\frac{j}{\ell}}) - W_{\Gamma,-}^\prime(p^{n+\frac{j-1}{\ell}}) \big] + \tfrac \tau \ell\, \eps\, \lambda^{n+1} 
  = p^{n+\frac{j-1}{\ell}} 
  \label{eqn:firstOrder:AllenCahn:cRefined}
\end{align}
stated in $\calVp^\ast$ for $j=1,\dots,\ell$. 

Consider a spatial discretization with $N_u$, $N_p$, and $N_\lambda$ degrees of freedom for the variables $u$, $p$, and $\lambda$, respectively. Then, this extension leads to a nonlinear system of size $2N_u+\ell N_p+N_\lambda$. Since we expect $N_p \ll N_u$ (we have, e.g., $N_u=N_p^2$ for a uniform grid in two space dimensions), this is much smaller than considering~\eqref{eqn:firstOrder:AllenCahn:c} entirely with the refined time step size $\tau/\ell$. 
\begin{remark}
The numerical scheme with \eqref{eqn:firstOrder:AllenCahn:c} replaced by \eqref{eqn:firstOrder:AllenCahn:cRefined} is still energy-dissipative. To see this, one considers test functions $\frac\ell\tau(p^{n+\frac{j}{\ell}}-p^{n+\frac{j-1}{\ell}})$ and proceeds as in the proof of Proposition~\ref{prop:firstOrder:AllenCahn}. 
\end{remark}
In the following two subsections, we turn to dynamic boundary conditions of Cahn--Hilliard type. 
%
%
\subsection{Boundary conditions of Liu and Wu}\label{sec:firstOrder:LiuWu} 
We start with the boundary conditions discussed in Section~\ref{sec:CHmodel:LiuWu}. The first-order discretization of~\eqref{eqn:opEqn:LiuWu} using the convex--concave splitting yields the time stepping scheme 
\begin{subequations}
\label{eqn:firstOrder:LiuWu}
\begin{alignat}{3}
	u^{n+1} + \tau\,\sigma \calK_\Omega w^{n+1} 
	&= u^n &&\qquad \text{in } \calVw^\ast, \label{eqn:firstOrder:LiuWu:a}\\
	\eps\, \calK_\Omega u^{n+1} + \eps^{-1} \big[ W_+^\prime(u^{n+1}) - W_-^\prime(u^n) \big] - \eps\, \calB^\ast \lambda^{n+1} 
	&= w^{n+1} &&\qquad \text{in } \calVu^\ast, \label{eqn:firstOrder:LiuWu:b}\\
	p^{n+1} + \tau\, \calK_\Gamma w_\Gamma^{n+1} 
	&= p^n &&\qquad \text{in } \calVp^\ast, \label{eqn:firstOrder:LiuWu:c}\\
	\delta \kappa\, \calK_\Gamma p^{n+1} + \delta^{-1} \big[ W_{\Gamma,+}^\prime(p^{n+1}) - W_{\Gamma,-}^\prime(p^n) \big] + \eps\,  \lambda^{n+1} 
	&= w_\Gamma^{n+1} &&\qquad \text{in } \calVp^\ast, \label{eqn:firstOrder:LiuWu:d}\\
	\calB u^{n+1} - p^{n+1}
	&= 0 &&\qquad \text{in } \calQ^\ast.
\end{alignat}
\end{subequations}
As in the previous section, we expect initial data $u^0\in\calVu$ and $p^0\in\calVp$. The convex--concave splitting again guarantees the preservation of energy-dissipation. 
\begin{proposition}
\label{prop:firstOrder:LiuWu}	
Under the assumption of consistent initial data, i.e., $p^0 = u^0|_\Gamma$, the scheme~\eqref{eqn:firstOrder:LiuWu} is first-order accurate and energy-dissipative, i.e., $E(u^{n})	\ge E(u^{n+1})$ for all $n\ge 0$. 
\end{proposition}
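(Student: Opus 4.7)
The plan is to mirror the strategy of Proposition~\ref{prop:firstOrder:AllenCahn}, adapted to the fact that the Liu--Wu scheme involves the independent boundary chemical potential $w_\Gamma^{n+1}$. The two dissipation contributions should come from separately testing~\eqref{eqn:firstOrder:LiuWu:a} by $w^{n+1}$ and~\eqref{eqn:firstOrder:LiuWu:c} by $w_\Gamma^{n+1}$, which produces the nonnegative terms $\tau\sigma\,\|\nabla w^{n+1}\|^{2}$ and $\tau\,\|\nabla_\Gamma w_\Gamma^{n+1}\|_\Gamma^{2}$, together with the mixed pairings $\langle u^{n+1}-u^n,w^{n+1}\rangle$ and $\langle p^{n+1}-p^n,w_\Gamma^{n+1}\rangle_\Gamma$.

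Then I would test~\eqref{eqn:firstOrder:LiuWu:b} by $u^{n+1}-u^n$ and~\eqref{eqn:firstOrder:LiuWu:d} by $p^{n+1}-p^n$, so that their right-hand sides cancel the two mixed pairings from the previous step once the four identities are summed. The crucial cancellation concerns the Lagrange multiplier: the contributions $-\eps\,\langle\lambda^{n+1},\calB(u^{n+1}-u^n)\rangle$ and $+\eps\,\langle\lambda^{n+1},p^{n+1}-p^n\rangle$ vanish because consistency of the initial data together with the algebraic constraint yields $\calB u^{n}=p^{n}$ for every $n\ge 0$, and hence $\calB(u^{n+1}-u^n)=p^{n+1}-p^n$. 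This is the step where the assumption $p^{0}=u^{0}|_\Gamma$ enters essentially, and it is the only point in the argument that is not purely mechanical.

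What remains contains only symmetric bilinear forms in $u$ and $p$ together with pairings of the split potential derivatives. The polarization identity applied to $\calK_\Omega$ and $\calK_\Gamma$ produces the gradient differences $\tfrac{\eps}{2}\bigl(\|\nabla u^{n+1}\|^{2}-\|\nabla u^{n}\|^{2}\bigr)$ and $\tfrac{\delta\kappa}{2}\bigl(\|\nabla_\Gamma p^{n+1}\|_\Gamma^{2}-\|\nabla_\Gamma p^{n}\|_\Gamma^{2}\bigr)$ present in~$E$, plus two nonnegative squared-norm terms in the increments. Applying the convexity inequality~\eqref{eqn:propConvexity} once to each of $W_{+},W_{-},W_{\Gamma,+},W_{\Gamma,-}$ bounds the potential pairings from below by $\eps^{-1}\bigl(W(u^{n+1})-W(u^n)\bigr)$ and $\delta^{-1}\bigl(W_\Gamma(p^{n+1})-W_\Gamma(p^n)\bigr)$; collecting everything yields $E(u^{n+1})-E(u^n)\le 0$. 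First-order accuracy follows as in Proposition~\ref{prop:firstOrder:AllenCahn}, since the deviation from a fully implicit Euler step is only the $\mathcal{O}(\tau)$ perturbation caused by evaluating $W_{-}'$ and $W_{\Gamma,-}'$ at $u^n$ and $p^n$ rather than at $u^{n+1}$ and $p^{n+1}$.
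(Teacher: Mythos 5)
Your proposal is correct and follows essentially the same route as the paper's proof: test \eqref{eqn:firstOrder:LiuWu:a}--\eqref{eqn:firstOrder:LiuWu:d} with $w^{n+1}$, $u^{n+1}-u^n$, $w_\Gamma^{n+1}$, and $p^{n+1}-p^n=(u^{n+1}-u^n)|_\Gamma$, sum, use the symmetric-bilinear-form identity on $\calK_\Omega$ and $\calK_\Gamma$, and apply the convexity property \eqref{eqn:propConvexity} to the split potentials, with the Lagrange-multiplier cancellation secured by consistency of the initial data and the constraint. Your explicit remark on where $p^0=u^0|_\Gamma$ enters, and your one-line accuracy argument, merely spell out details the paper leaves implicit (it delegates accuracy to the Allen--Cahn case and shows only dissipation).
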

\begin{proof}
We only show the dissipation of the total energy. For this, we consider~\eqref{eqn:firstOrder:LiuWu:a}-\eqref{eqn:firstOrder:LiuWu:d} with test functions~$w^{n+1}$, $u^{n+1}-u^n$, $w_\Gamma^{n+1}$, and~$p^{n+1}-p^n = (u^{n+1}-u^n)|_\Gamma$, respectively. The sum of these equations gives 
\begin{align*}
0
&= \frac\eps2\, \big( \|\nabla u^{n+1}\|^2 - \|\nabla u^{n}\|^2 + \|\nabla (u^{n+1}-u^n)\|^2 \big) + \tau\sigma\, \| \nabla w^{n+1}\|^2 \\
&\quad+ \frac{\delta\kappa}{2}\, \big( \|\nabla_\Gamma p^{n+1}\|_\Gamma^2 - \|\nabla_\Gamma p^{n}\|_\Gamma^2 + \|\nabla_\Gamma (p^{n+1}-p^n)\|_\Gamma^2 \big) + \tau\, \| \nabla w_\Gamma^{n+1}\|_\Gamma^2 \\
&\quad+ \eps^{-1} \big( W_+^\prime(u^{n+1}) - W_-^\prime(u^n) \big)\, (u^{n+1}-u^n) + \delta^{-1} \big( W_{\Gamma,+}^\prime(p^{n+1}) - W_{\Gamma,-}^\prime(p^n) \big)(p^{n+1}-p^n).
\end{align*}
Using again the convexity property~\eqref{eqn:propConvexity}, we obtain $0\ge E(u^{n+1})-E(u^n)$.
%
\end{proof}
\begin{remark}
As for the boundary conditions of Allen--Cahn type, we may consider a finer discretization on the boundary. For this, one replaces equations~\eqref{eqn:firstOrder:LiuWu:c} and~\eqref{eqn:firstOrder:LiuWu:d} by an appropriate discretization with step size $\tau/\ell$; see the construction in Section~\ref{sec:firstOrder:AllenCahn}.	This then again maintains the dissipation property of the energy. 
\end{remark}
%
%
\subsection{Boundary conditions of Goldstein, Miranville, and Schimpera}\label{sec:firstOrder:Goldstein} 
Finally, we consider the second model of dynamic boundary conditions of Cahn--Hilliard type. Here, the discretization of~\eqref{eqn:opEqn:Goldstein} using the convex--concave splitting yields the time stepping scheme 
\begin{subequations}
\label{eqn:firstOrder:GMS}
\begin{alignat}{3}
	u^{n+1} + \tau\,\sigma \calK_\Omega w^{n+1} - \tau\, \calB^\ast \mu^{n+1} 
	&= u^n &&\qquad \text{in } \calVw^\ast, \label{eqn:firstOrder:GMS:a}\\
	\eps\, \calK_\Omega u^{n+1} + \eps^{-1} \big[ W_+^\prime(u^{n+1}) - W_-^\prime(u^n) \big] - \eps\, \calB^\ast \lambda^{n+1} 
	&= w^{n+1} &&\qquad \text{in } \calVu^\ast, \label{eqn:firstOrder:GMS:b}\\
	p^{n+1} + \tau\, \calK_\Gamma r^{n+1} + \tau \mu^{n+1} 
	&= p^n &&\qquad \text{in } \calVp^\ast, \label{eqn:firstOrder:GMS:c}\\
	\delta \kappa\, \calK_\Gamma p^{n+1} + \delta^{-1} \big[ W_{\Gamma,+}^\prime(p^{n+1}) - W_{\Gamma,-}^\prime(p^n) \big] + \eps\,  \lambda^{n+1} 
	&= r^{n+1} &&\qquad \text{in } \calVp^\ast, \label{eqn:firstOrder:GMS:d}\\
	\calB u^{n+1} - p^{n+1}
	&= 0 &&\qquad \text{in } \calQ^\ast, \\
	\calB w^{n+1} - r^{n+1}
	&= 0 &&\qquad \text{in } \calQ^\ast
\end{alignat}
\end{subequations}
with initial data $u^0\in\calVu$ and $p^0\in\calVp$. Similar to the previous two models, one may introduce a temporal refinement on the boundary by an adjustment of equations~\eqref{eqn:firstOrder:GMS:c} and~\eqref{eqn:firstOrder:GMS:d}. In any case, we get the following result on the dissipation of energy. 
\begin{proposition}
\label{prop:firstOrder:GMS}
Under the assumption of consistent initial data, i.e., $p^0 = u^0|_\Gamma$, the scheme~\eqref{eqn:firstOrder:GMS} is first-order accurate and energy-dissipative. 
\end{proposition}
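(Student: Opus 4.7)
The plan is to mimic the testing strategy used in the proofs of Propositions~\ref{prop:firstOrder:AllenCahn} and~\ref{prop:firstOrder:LiuWu}, with the additional ingredient that now the constraint $\calB w^{n+1} = r^{n+1}$ is needed to eliminate the extra Lagrange multiplier $\mu^{n+1}$. The first-order accuracy follows from the same perturbation argument as before: the scheme coincides with the implicit Euler method up to the replacement of $W_-^\prime(u^{n+1})$ by $W_-^\prime(u^n)$ (and analogously on the boundary), which is an $\calO(\tau)$ perturbation. So I only describe the dissipation part.

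First I would note that the consistency $p^0 = u^0|_\Gamma$ together with the constraint $\calB u^{n+1} - p^{n+1} = 0$ inductively gives $p^n = u^n|_\Gamma$ for every $n\ge 0$; likewise, the constraint $\calB w^{n+1} - r^{n+1} = 0$ means that $r^{n+1}$ equals the trace of $w^{n+1}$. I would then test \eqref{eqn:firstOrder:GMS:a} with $w^{n+1}\in\calV$, \eqref{eqn:firstOrder:GMS:b} with $u^{n+1}-u^n\in\calV$, \eqref{eqn:firstOrder:GMS:c} with $r^{n+1}\in\calVp$, and \eqref{eqn:firstOrder:GMS:d} with $p^{n+1}-p^n = (u^{n+1}-u^n)|_\Gamma \in \calVp$.

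The key observation is the cancellation of the multiplier terms upon summation. For the Lagrange multiplier $\mu^{n+1}$: the term $-\tau\langle\calB^\ast \mu^{n+1}, w^{n+1}\rangle$ arising from \eqref{eqn:firstOrder:GMS:a} equals $-\tau\langle\mu^{n+1}, \calB w^{n+1}\rangle = -\tau\langle\mu^{n+1}, r^{n+1}\rangle$, which exactly cancels the contribution $\tau\langle\mu^{n+1}, r^{n+1}\rangle$ coming from testing \eqref{eqn:firstOrder:GMS:c} with $r^{n+1}$. Similarly, the $\lambda^{n+1}$-terms from \eqref{eqn:firstOrder:GMS:b} and \eqref{eqn:firstOrder:GMS:d} cancel because $\calB(u^{n+1}-u^n) = p^{n+1}-p^n$. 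Applying the polarization identity $2\mathfrak{a}(u,u-v) = \|u\|_{\mathfrak{a}}^2 - \|v\|_{\mathfrak{a}}^2 + \|u-v\|_{\mathfrak{a}}^2$ to the bilinear forms associated with $\calK_\Omega$ and $\calK_\Gamma$, the sum of the four tested equations becomes
\begin{align*}
0 &= \tfrac{\eps}{2}\bigl(\|\nabla u^{n+1}\|^2 - \|\nabla u^n\|^2 + \|\nabla(u^{n+1}-u^n)\|^2\bigr) + \tau\sigma\|\nabla w^{n+1}\|^2 \\
&\quad + \tfrac{\delta\kappa}{2}\bigl(\|\nabla_\Gamma p^{n+1}\|_\Gamma^2 - \|\nabla_\Gamma p^n\|_\Gamma^2 + \|\nabla_\Gamma(p^{n+1}-p^n)\|_\Gamma^2\bigr) + \tau\|\nabla_\Gamma r^{n+1}\|_\Gamma^2 \\
&\quad + \eps^{-1}\bigl(W_+^\prime(u^{n+1}) - W_-^\prime(u^n)\bigr)(u^{n+1}-u^n) + \delta^{-1}\bigl(W_{\Gamma,+}^\prime(p^{n+1}) - W_{\Gamma,-}^\prime(p^n)\bigr)(p^{n+1}-p^n).
\end{align*}

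Finally, invoking the convexity property \eqref{eqn:propConvexity} four times (once each for $W_+$, $W_-$, $W_{\Gamma,+}$, $W_{\Gamma,-}$) estimates the potential terms from below by $\eps^{-1}(W(u^{n+1})-W(u^n))$ and $\delta^{-1}(W_\Gamma(p^{n+1}) - W_\Gamma(p^n))$, whereas the gradient squared terms on $(u^{n+1}-u^n)$ and $(p^{n+1}-p^n)$ as well as the dissipative terms $\tau\sigma\|\nabla w^{n+1}\|^2$ and $\tau\|\nabla_\Gamma r^{n+1}\|_\Gamma^2$ are all nonnegative and can be dropped. This yields $0 \ge E(u^{n+1}) - E(u^n)$. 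The main obstacle, and where the argument genuinely differs from the previous two propositions, is tracking the second multiplier $\mu^{n+1}$ and recognizing that the trace constraint on $w$ is precisely what is needed to make its contribution vanish.
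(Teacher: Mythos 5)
Your proposal is correct and follows essentially the same route as the paper: the paper's proof likewise tests \eqref{eqn:firstOrder:GMS:a}--\eqref{eqn:firstOrder:GMS:d} with $w^{n+1}$, $u^{n+1}-u^n$, $r^{n+1}=w^{n+1}|_\Gamma$, and $p^{n+1}-p^n=(u^{n+1}-u^n)|_\Gamma$, and then refers to the Liu--Wu calculation of Proposition~\ref{prop:firstOrder:LiuWu}, which is exactly the computation you carry out, including the cancellation of the $\mu^{n+1}$-term via the trace constraint on $w$. Your explicit tracking of both multiplier cancellations and the brief perturbation argument for first-order accuracy merely spell out details the paper leaves implicit.
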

\begin{proof}
For the dissipation property, we consider~\eqref{eqn:firstOrder:GMS:a}-\eqref{eqn:firstOrder:GMS:d} with test functions~$w^{n+1}$, $u^{n+1}-u^n$, $r^{n+1}=w^{n+1}|_\Gamma$, and~$p^{n+1}-p^n = (u^{n+1}-u^n)|_\Gamma$, respectively. 
A calculation as for the Liu--Wu model in Proposition~\ref{prop:firstOrder:LiuWu} then yields the assertion~$E(u^{n}) \ge E(u^{n+1})$.
\end{proof}
%
%
\subsection{Numerical experiments}\label{sec:firstOrder:numerics} 
In this final part on first-order discretization schemes, we illustrate the claimed dissipation properties and the impact of possible refinements on the boundary for two model problems. 
%
\subsubsection{Example with Allen--Cahn type boundary conditions}
\label{sec:firstOrder:numerics:AC} 
We consider dynamic boundary conditions of Allen--Cahn type with the time stepping scheme introduced in Section~\ref{sec:firstOrder:AllenCahn}. For this, we choose the unit square as spatial domain and $u^0 = \cos(4\pi x)\cos(4\pi y)$, $p^0=u^0|_\Gamma$ as consistent initial values. The interaction lengths are given by $\eps = 0.02$ and $\delta = 0.2$, the dissipation coefficients by~$\sigma = 0.01$ and $\kappa = 5$. Figure~\ref{fig:error_4_ell} shows the errors in~$u$ and~$p$ for different numbers of intermediate time steps on the boundary (characterized by the parameter~$\ell$). Here, the time horizon is $T=0.2$ and the time step size~$\tau = 0.1\cdot 2^{-7}$. We observe that the additional time steps reduce the error for~$p$, whereas the error for~$u$ remains unchanged. For the sake of completeness, we emphasize that the total energy decreases for any choice of~$\ell$ as predicted in Proposition~\ref{prop:firstOrder:AllenCahn}.
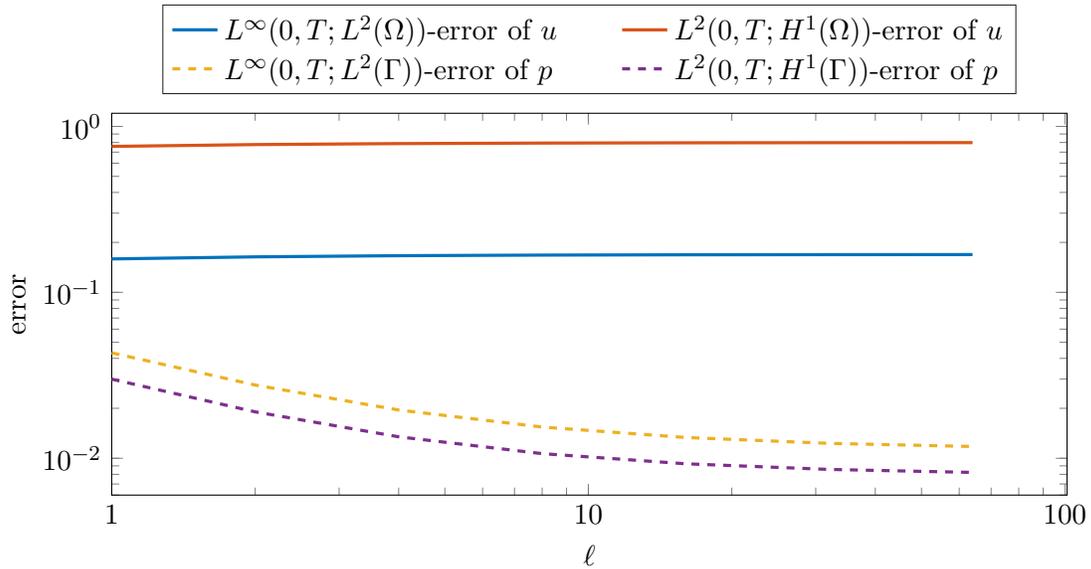
\begin{figure}
%
%
\definecolor{mycolor1}{rgb}{0.00000,0.44700,0.74100}%
\definecolor{mycolor2}{rgb}{0.85000,0.32500,0.09800}%
\definecolor{mycolor3}{rgb}{0.92900,0.69400,0.12500}%
\definecolor{mycolor4}{rgb}{0.49400,0.18400,0.55600}%
\begin{tikzpicture}

\begin{axis}[%
width=5.0in,
height=2.0in,
at={(2.502in,1.101in)},
scale only axis,
xmode=log,
xmin=1,
xmax=101,
xtick={1,10,100},
xticklabels={$1$,$10$,$100$},
xlabel=$\ell$,
ymode=log,
ymin=0.006,
ymax=1.2,
yminorticks=true,
ylabel=error,
axis background/.style={fill=white},
legend columns = 2,
legend style={legend cell align=left, align=left, at={(0.5,1.05)}, anchor=south, draw=white!15!black}
]
\addplot [color=mycolor1, very thick]
  table[row sep=crcr]{%
1	0.158994313263448\\
2	0.163607757841209\\
4	0.166286216841623\\
8	0.16766756339135\\
16	0.168368943047456\\
32	0.168722330757982\\
64	0.168899701693298\\
};
\addlegendentry{$L^\infty(0,T;L^2(\Omega))$-error of $u$\qquad}

\addplot [color=mycolor2, very thick]
  table[row sep=crcr]{%
1	0.757749167883295\\
2	0.777211852561135\\
4	0.787927581596075\\
8	0.793566680952436\\
16	0.796461701114488\\
32	0.797928781467317\\
64	0.798667305989666\\
};
\addlegendentry{$L^2(0,T;H^1(\Omega))$-error of $u$}

\addplot [color=mycolor3, very thick, dashed]
  table[row sep=crcr]{%
1	0.0431841593830126\\
2	0.0276228254972625\\
4	0.0195424029640684\\
8	0.0154232595808663\\
16	0.0133434487995587\\
32	0.0122984242797508\\
64	0.0117746255974231\\
};
\addlegendentry{$L^\infty(0,T;L^2(\Gamma))$-error of $p$\qquad}

\addplot [color=mycolor4, very thick, dashed]
  table[row sep=crcr]{%
1	0.0299821550677144\\
2	0.0190478660347389\\
4	0.0134699754314019\\
8	0.0106654916963906\\
16	0.0092649527474169\\
32	0.00856671448839346\\
64	0.00821843516411079\\
};
\addlegendentry{$L^2(0,T;H^1(\Gamma))$-error of $p$}

\end{axis}
\end{tikzpicture}%
\caption{Errors for different numbers of intermediate time steps on the boundary (with step size $\tau/\ell$) for the example of Section~\ref{sec:firstOrder:numerics:AC}.  }
\label{fig:error_4_ell}
\end{figure}

In this example, we have chosen the dissipation coefficient~$\kappa$ in such a way  that the solution~$p$ changes rapidly on the boundary. While for the here considered Allen--Cahn type boundary conditions this leads to a vanishing~$p$, for boundary conditions of Liu--Wu type, the phases on the boundary separate quickly. In that setting, a finer temporal mesh for the variable~$p$ has an even bigger impact as we show in the second example. 
%
\subsubsection{Example with Liu--Wu type boundary conditions}
\label{sec:firstOrder:numerics:CH} 
We now consider Liu--Wu type boundary conditions and the numerical scheme from Section~\ref{sec:firstOrder:LiuWu}. We choose the same parameters as in the previous subsection but with~$\kappa=10$. As mentioned before, this leads to a rapidly changing variable $p$. Due to the constraint~$u|_{\Gamma} = p$, this also drives the dynamic behavior of the solution in the bulk. As a result, a refined time discretization of~$p$ also has a positive effect on the approximation of~$u$, which can be seen in Figure~\ref{fig:CH_CH_ell_final_time}.
\begin{figure}
\includegraphics[width=4cm,height=4cm]{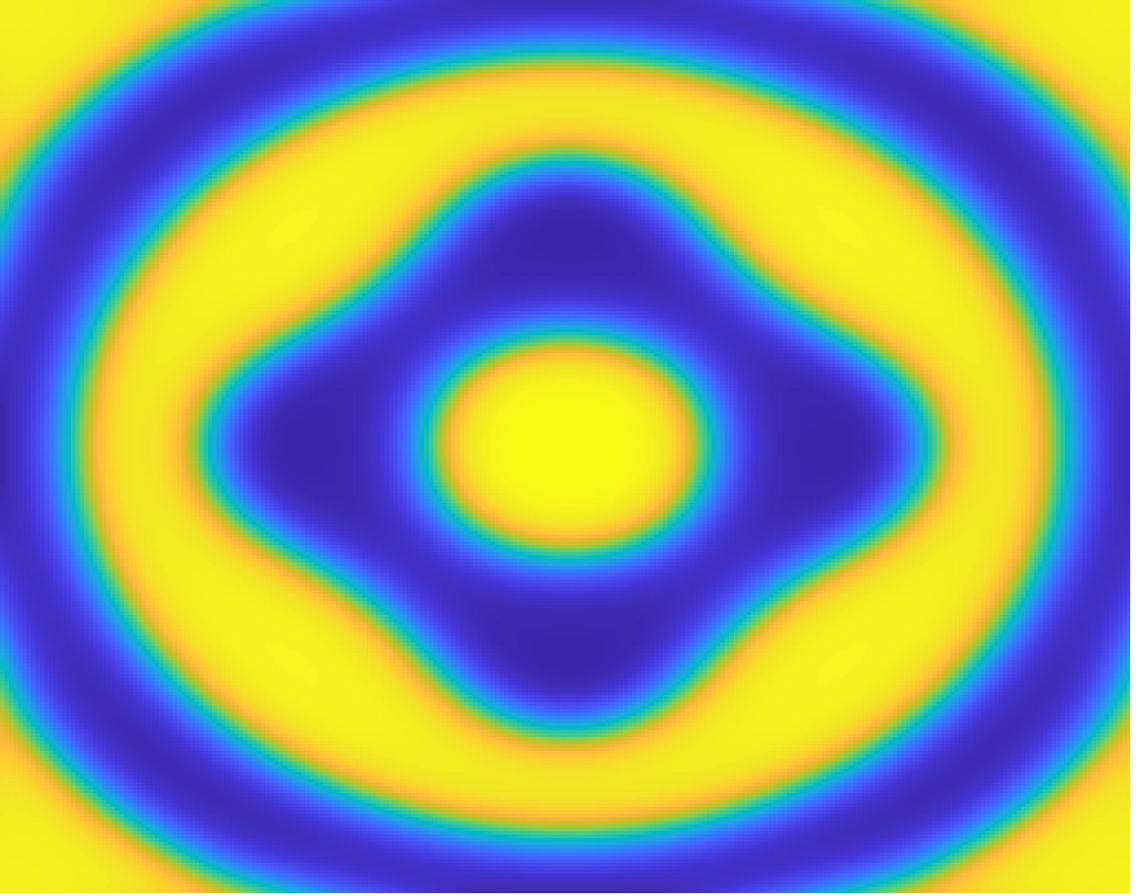}\qquad
\includegraphics[width=4cm,height=4cm]{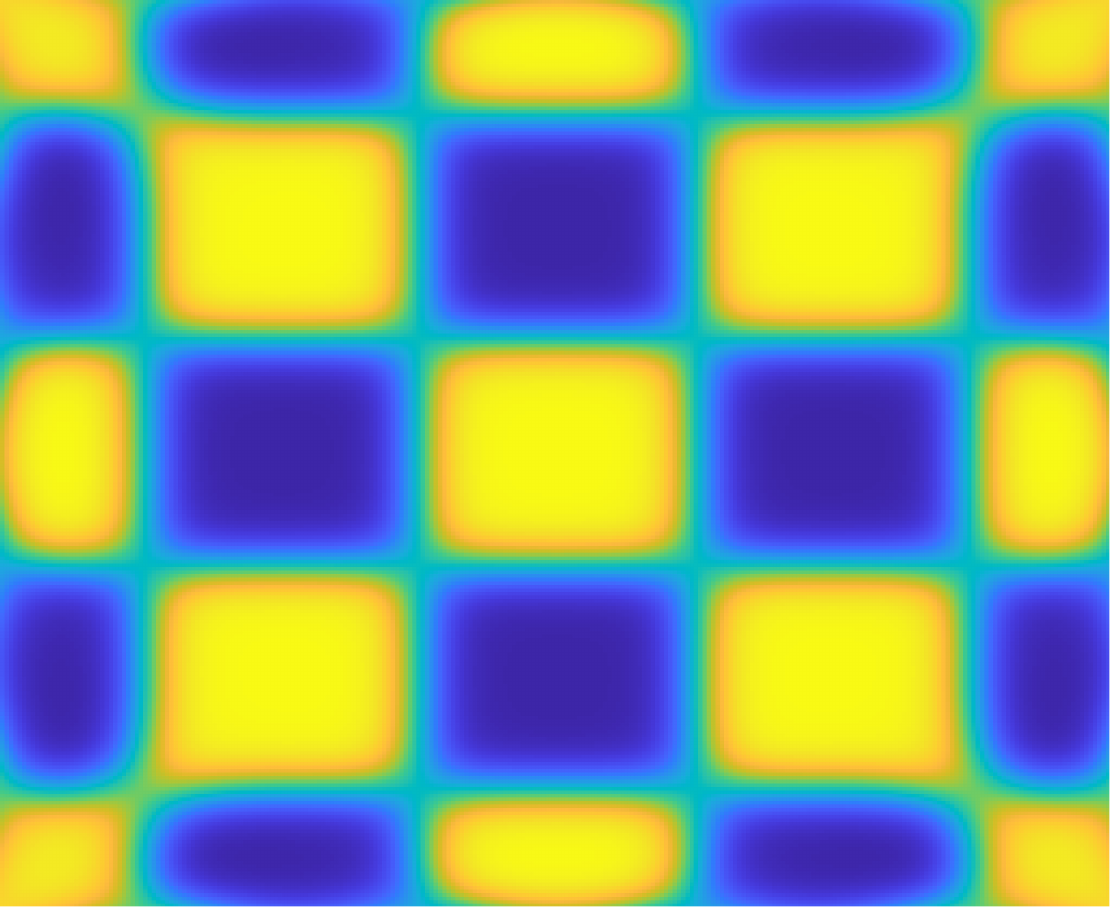}\qquad
\includegraphics[width=4cm,height=4cm]{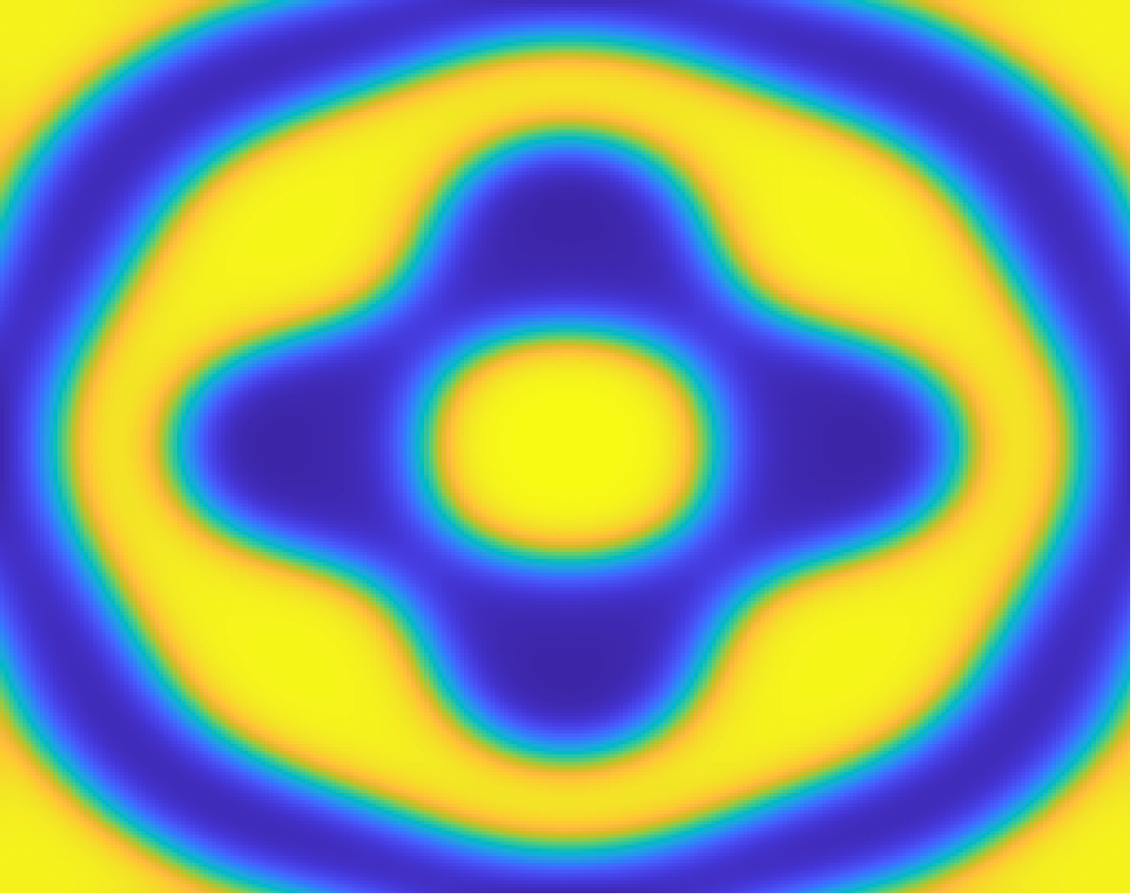}
\caption{Numerical solutions of the Cahn--Hilliard equation with Liu--Wu type boundary conditions at $t=0.1$ for different temporal step sizes: $\tau=1.5625\cdot 10^{-5}$, $\ell=1$ (left), $\tau=2\cdot 10^{-3}$, $\ell=1$ (middle), and $\tau=2\cdot 10^{-3}$, $\ell=16$ (right).}
\label{fig:CH_CH_ell_final_time}
\end{figure}
Obviously, the temporal discretization in the middle is too coarse such that the behavior of~$p$ is not well reproduced. With some additional time steps on the boundary, however, the reference solution (left) and its numerical approximation (right) are much closer. This can also be observed in the errors shown in Figure~\ref{fig:CH_CH_error_ell}.
\begin{figure}
%
%
\definecolor{mycolor1}{rgb}{0.00000,0.44700,0.74100}%
\definecolor{mycolor2}{rgb}{0.85000,0.32500,0.09800}%
\definecolor{mycolor3}{rgb}{0.92900,0.69400,0.12500}%
\definecolor{mycolor4}{rgb}{0.49400,0.18400,0.55600}%
\begin{tikzpicture}

\begin{axis}[%
width=5.0in,
height=2.0in,
at={(0.758in,0.481in)},
scale only axis,
xmode=log,
xmin=1,
xmax=128,
xtick={1,10,100},
xticklabels={$1$,$10$,$100$},
xminorticks=true,
xlabel=$\ell$,
ymode=log,
ymin=0.1,
ymax=5,
yminorticks=true,
ylabel=error,
legend columns = 2,
legend style={legend cell align=left, align=left, at={(0.5,1.03)}, anchor=south, draw=white!15!black}
]
\addplot [color=mycolor1, very thick]
  table[row sep=crcr]{%
1	0.688012332287056\\
2	0.477397154246698\\
4	0.363186693942521\\
8	0.280715683203514\\
16	0.220106710941198\\
32	0.179585754366018\\
64	0.164202513397434\\
128 0.162270641480676\\
};
\addlegendentry{$L^\infty(0,T;L^2(\Omega))$-error of $u$\qquad }

\addplot [color=mycolor2, very thick]
  table[row sep=crcr]{%
1	4.17245963287402\\
2	3.54328460612032\\
4	2.72132268230312\\
8	2.15355853364285\\
16	1.81024525003219\\
32	1.61396578277007\\
64  1.502411728898540\\
128 1.455983582329573\\
};
\addlegendentry{$L^2(0,T;H^1(\Omega))$-error of $u$ }

\addplot [color=mycolor3, very thick, dashed]
  table[row sep=crcr]{%
1	1.88890071527927\\
2	1.87705515782608\\
4	1.85531446769663\\
8	1.8111665050211\\
16	1.70732845042349\\
32	1.53634807621663\\
64 1.315799595446598\\
128 1.100472025853220\\
};
\addlegendentry{$L^\infty(0,T;L^2(\Gamma))$-error of $p$\qquad }

\addplot [color=mycolor4, very thick, dashed]
  table[row sep=crcr]{%
1	4.77255210023299\\
2	3.25066315562108\\
4	2.26440953062336\\
8	1.61229984001709\\
16	1.1881889081304\\
32	0.912126885748158\\
64  0.749526086163086\\
128 0.668104020282098\\
};
\addlegendentry{$L^2(0,T;H^1(\Gamma))$-error of $p$}
\end{axis}


\end{tikzpicture}%
\caption{Errors for $\tau=2\cdot 10^{-3}$ and different numbers of intermediate time steps (with step size $\tau/\ell$) on the boundary for the example of Section~\ref{sec:firstOrder:numerics:CH}.}
\label{fig:CH_CH_error_ell}
\end{figure}
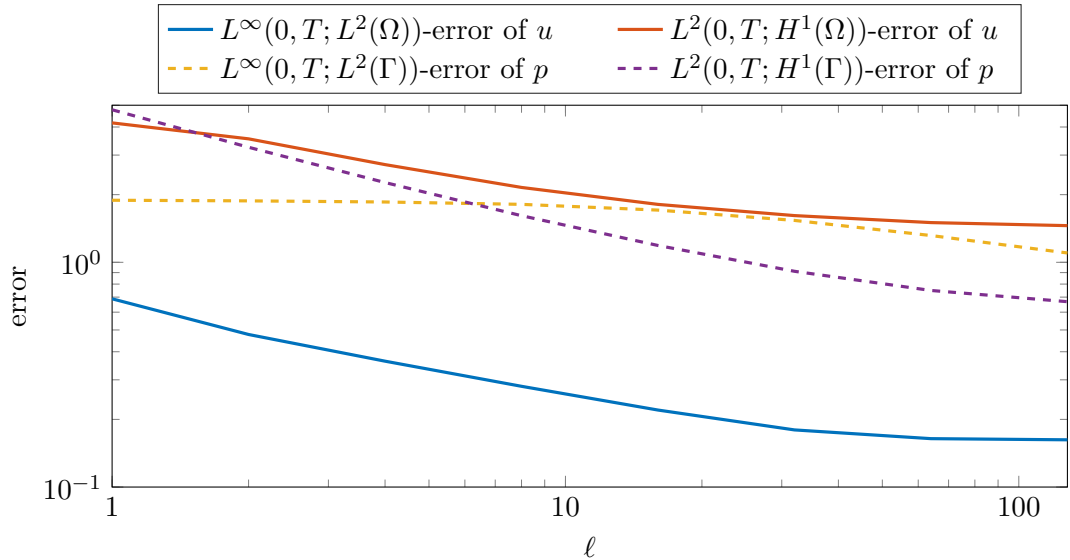
\section{Dissipation-preserving Discretization of Second Order}\label{sec:secondOrder} 
This section is devoted to the construction of second-order schemes, which are dissipation-preserving. Here, a convex--concave splitting of the potentials $W$ and $W_\Gamma$ is not sufficient, since this limits the convergence order to one. Instead, we consider a discretization of Crank--Nicolson type. 
In this section, we restrict ourselves to the case of polynomial double-well potentials, i.e., we consider the nonlinearities 
\[
  W(u) = \frac14\, (u^2-1)^2,\qquad
  W_\Gamma(p) = \frac14\, (p^2-1)^2.
\]
%

As in the previous section, we focus on the temporal discretization, i.e., we discuss time stepping schemes for the operator formulations presented in Section~\ref{sec:CHmodel} with constant step size~$\tau$. 
In the numerical experiment, however, we will also illustrate the possibility of using different spatial discretizations in the bulk and on the surface. 
Throughout this section, we need averages of the previous and the current iteration. To shorten notation, we hence introduce $u^{n+1/2} \coloneqq \frac12 (u^{n} + u^{n+1})$ and analogously for the other variables. 

As in the previous section, we introduce the dissipation-preserving time stepping scheme by means of the pure Neumann case, i.e., by system~\eqref{eqn:opEqn:Neumann}. With the specific choice of the nonlinearity, the proposed time stepping scheme reads 
\begin{subequations}
\label{eqn:secondOrder:fullCN}
\begin{alignat}{3}
	u^{n+1} + \tau\,\sigma \calK_\Omega w^{n+1/2} 
	&= u^n &&\qquad \text{in } \calVw^\ast, \label{eqn:secondOrder:fullCN:a}\\
	\eps\, \calK_\Omega u^{n+1/2} + \eps^{-1} \big( \tfrac{|u^{n+1}|^2+|u^n|^2}{2} - 1 \big)\, u^{n+1/2} 
	&= w^{n+1/2} &&\qquad \text{in } \calVu^\ast. \label{eqn:secondOrder:fullCN:b}
\end{alignat}
\end{subequations}
Note that, in contrast to the classical second-order Crank--Nicolson scheme, we use the expression~$\frac12 ( |u^{n+1}|^2+|u^n|^2 )$ instead of~$|u^{n+1/2}|^2$ within the nonlinearity, which is itself a second-order perturbation; see also~\cite{DuN91,Ell89}. At this point, we would like to emphasize that the classical Crank--Nicolson scheme is, in general, not energy-dissipative. 
\begin{remark}[Initial value of $w$]\label{rem:w0}
In contrast to the discretizations discussed in Section~\ref{sec:firstOrder}, the time integration scheme~\eqref{eqn:secondOrder:fullCN} calls for an initial value~$w^0$ of the variable~$w$. This can be calculated by fixing $u^0$ and solving~\eqref{eqn:opEqn:Neumann} for $w(0)$ and $\dot{u}(0)$. 
For the systems with dynamical boundary conditions, one calculates the initial values $w^0$, $w^0_{\Gamma}$, and $r^0$ analogously by considering the corresponding continuous system at time~$t=0$. 
\end{remark}
%
Considering~\eqref{eqn:secondOrder:fullCN:a} with the constant function as test function, one observes that this scheme maintains the conservation of mass property from the continuous setting. 
Also the dissipation property of the energy is preserved, i.e., it holds that~$E_{\text{bulk}}(u^n) \ge E_{\text{bulk}}(u^{n+1})$ for all $n\ge 0$. To see this, we consider the sum of~\eqref{eqn:secondOrder:fullCN:a}, tested with $w^{n+1/2}$, and~\eqref{eqn:secondOrder:fullCN:b}, tested with~$u^{n+1}-u^n$. This yields 
\[
  \eps\, \langle \calK_\Omega u^{n+1/2}, u^{n+1}-u^n \rangle 
  + \tfrac1{2\eps}\, \langle (|u^{n+1}|^2+|u^n|^2 -2)\, u^{n+1/2}, u^{n+1}-u^n\rangle 
  + \tau\sigma\, \| \nabla w^{n+1/2} \|^2
  = 0. 
\]
The first term on the left-hand side equals $\frac\eps2\, \|\nabla u^{n+1}\|^2 - \frac\eps2\, \|\nabla u^n \|^2$, whereas for the second term we use that 
\[
  2\, \big\langle (|u^{n+1}|^2+|u^n|^2 -2)\, u^{n+1/2}, u^{n+1}-u^n \big\rangle 
  = \int_\Omega |u^{n+1}|^4-|u^n|^4 -2\, \big(|u^{n+1}|^2 - |u^n|^2 \big) \dx.
\]
Hence, we get
\[
  \frac\eps2\, \|\nabla u^{n+1}\|^2 - \frac\eps2\, \|\nabla u^n \|^2
  + \frac1{\eps}\, \int_\Omega W(u^{n+1}) - W(u^{n}) \dx
  + \tau\sigma\, \| \nabla w^{n+1/2} \|^2
  = 0, 
\]
which directly implies the claimed energy-dissipation. 
%
\begin{remark}[Besse relaxation]
In order to obtain a scheme which is {\em explicit} in the nonlinearity, one may consider a {\em relaxation} in the sense of~\cite{Bes04} introduced for the nonlinear Schr\"odinger equation. 
For this, we replace~$\frac{1}{2}\, (|u^{n+1}|^2+|u^n|^2)$ in \eqref{eqn:secondOrder:fullCN:b} by a precomputed density $\rho^{n+1/2}$, given by the recursion formula  
%
\begin{align*}
	\rho^{-1/2} 
	\coloneqq |u^0|^2, \qquad
	\rho^{n+1/2} 
	\coloneqq 2\, |u^n|^2 - \rho^{n-1/2}. 
\end{align*}
%
This then results in an \emph{implicit-explicit} variant of the Crank--Nicolson scheme, where each time step only requires the solution of a linear system. One can show that this scheme satisfies the dissipation property for a modified discrete bulk energy. 
Second-order convergence, however, can only be observed for very restrictive parameter regimes in terms of~$\eps$ and~$\sigma$. Because of this, we do not consider this scheme in the following. 
\end{remark}
\begin{remark}
The term $\big(\tfrac{|u^{n+1}|^2+|u^n|^2}{2} - 1\big)\, u^{n+1/2}$ in~\eqref{eqn:secondOrder:fullCN:b} equals the difference quotient of $W(u)=\frac14 (u^2-1)^2$ evaluated at $u^{n}$ and $u^{n+1}$. Therefore, it is a second-order approximation of $W^\prime(u(t^{n+1/2}))$ if the errors for $u^{n}$ and $u^{n+1}$ are of second order. A multiplication by $u^{n+1}-u^{n}$ then gives the difference $W(u^{n+1})-W(u^{n})$. In this way, the here considered approach can be generalized other potentials~$W$. 
\end{remark}
%
%
\subsection{Allen--Cahn type boundary conditions}\label{sec:secondOrder:AllenCahn} 
We now turn to the second-order discretization of system~\eqref{eqn:opEqn:AllenCahn}. For this, we proceed as before, i.e., we consider the Crank--Nicolson discretization where the nonlinear terms are treated as 
\[
  W'(u(t^{n+1/2})) 
  \approx \big( \tfrac{|u^{n+1}|^2+|u^n|^2}{2} - 1 \big)\, u^{n+1/2}, \quad
  W_\Gamma'(p(t^{n+1/2})) 
  \approx \big( \tfrac{|p^{n+1}|^2+|p^n|^2}{2} - 1 \big)\, p^{n+1/2}.
\]
This then leads to the time stepping scheme 
\begin{subequations}
\label{eqn:secondOrder:AllenCahn}
\begin{alignat}{3}
	u^{n+1} + \tau\, \sigma\calK_\Omega w^{n+1/2} 
	&= u^n &&\qquad \text{in } \calVw^\ast, \label{eqn:secondOrder:AllenCahn:a} \\
	\eps\, \calK_\Omega u^{n+1/2} + \eps^{-1} \big(\tfrac{|u^{n+1}|^2+|u^n|^2}{2} - 1\big)\, u^{n+1/2} - \eps\, \calB^\ast \lambda^{n+1}
	&= w^{n+1/2} &&\qquad \text{in } \calVu^\ast \label{eqn:secondOrder:AllenCahn:b}\\
	p^{n+1} + \tau\delta \kappa\, \calK_\Gamma p^{n+1/2} + \tau\delta^{-1} \big(\tfrac{|p^{n+1}|^2+|p^n|^2}{2} - 1\big)\, p^{n+1/2} + \tau\eps\, \lambda^{n+1} 
	&= p^n &&\qquad \text{in } \calVp^\ast, \label{eqn:secondOrder:AllenCahn:c}\\
	\calB u^{n+1} - p^{n+1}  &= 0 &&\qquad \text{in } \calQ^\ast. \label{eqn:secondOrder:AllenCahn:d}
\end{alignat}
\end{subequations}
Note that, in the case of consistent initial data, the constraint~\eqref{eqn:secondOrder:AllenCahn:d} is equivalent to~$\calB u^{n+1/2} - p^{n+1/2} = 0$. For the computation of the initial value for $w$, we refer to Remark~\ref{rem:w0}. We show that the proposed scheme is indeed energy-dissipative. 
\begin{proposition}
\label{prop:secondOrder:AllenCahn}
Assume consistent initial data, i.e., $p^0 = u^0|_\Gamma$. Then, the scheme~\eqref{eqn:secondOrder:AllenCahn} is energy-dissipative, i.e., 
\[
  E(u^{n}) \ge E(u^{n+1})
\]
for all $n\ge 0$. 

\end{proposition}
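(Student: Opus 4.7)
My plan is to follow the blueprint of the proof of Proposition~\ref{prop:firstOrder:AllenCahn}, replacing the convex--concave splitting inequality~\eqref{eqn:propConvexity} by the exact algebraic telescoping identity already employed in the Neumann analysis preceding this proposition. First, from the consistency of the initial data together with~\eqref{eqn:secondOrder:AllenCahn:d}, I would note that $p^n = \calB u^n$ holds for every $n\ge 0$, so that $E(u^n) = E_{\text{bulk}}(u^n) + E_{\text{surf}}(p^n)$ throughout the iteration.

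Next, I would test equations~\eqref{eqn:secondOrder:AllenCahn:a}, \eqref{eqn:secondOrder:AllenCahn:b}, and~\eqref{eqn:secondOrder:AllenCahn:c} with $w^{n+1/2}$, $u^{n+1}-u^n$, and $\tau^{-1}(p^{n+1}-p^n)$, respectively, and sum the three resulting scalar equalities. In this sum, the cross-term $\langle w^{n+1/2}, u^{n+1}-u^n\rangle$ cancels between the first two equations, while the two Lagrange multiplier contributions cancel via $\calB(u^{n+1}-u^n) = p^{n+1}-p^n$, which is guaranteed by the constraint. The elliptic terms reduce to telescoping differences of gradient norms through the symmetric bilinear-form identity $2\mathfrak{a}(v^{n+1/2}, v^{n+1}-v^n) = \|v^{n+1}\|_\mathfrak{a}^2 - \|v^n\|_\mathfrak{a}^2$ stated at the beginning of Section~\ref{sec:secondOrder}, producing $\tfrac{\eps}{2}(\|\nabla u^{n+1}\|^2 - \|\nabla u^n\|^2)$ and $\tfrac{\delta\kappa}{2}(\|\nabla_\Gamma p^{n+1}\|_\Gamma^2 - \|\nabla_\Gamma p^n\|_\Gamma^2)$.

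The heart of the argument is the treatment of the cubic nonlinearity, which I would handle by re-using the pointwise identity from the Neumann analysis, namely $\bigl(\tfrac{|a|^2+|b|^2}{2} - 1\bigr)\tfrac{a+b}{2}(a-b) = W(a) - W(b)$ for $W(s)=\tfrac14(s^2-1)^2$. This follows by a short expansion from $(|a|^2+|b|^2)(a^2-b^2) = a^4-b^4$, and the analogous identity applies verbatim to $W_\Gamma$. Plugging in $a=u^{n+1}$, $b=u^n$ pointwise in $\Omega$ (and analogously for $p^{n+1}, p^n$ on $\Gamma$) yields the exact telescoping of the potential contributions. Collecting everything, the summed equation collapses to $E(u^{n+1}) - E(u^n) + \tau\sigma\|\nabla w^{n+1/2}\|^2 + \tau^{-1}\|p^{n+1}-p^n\|_\Gamma^2 = 0$, which immediately implies the assertion.

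I do not anticipate any serious obstacle; the only subtle point is the choice of test function for~\eqref{eqn:secondOrder:AllenCahn:c}: one must take $\tau^{-1}(p^{n+1}-p^n)$ rather than anything symmetric in the midpoint like $p^{n+1/2}$, because only then does the boundary Lagrange multiplier pair exactly with $\calB(u^{n+1}-u^n)$ and drop out of the sum. The rest is a direct reorganization of the Neumann computation already displayed in the text.
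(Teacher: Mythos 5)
Your proposal is correct and matches the paper's own proof essentially verbatim: the same test functions $w^{n+1/2}$, $u^{n+1}-u^n$, and $\tau^{-1}(p^{n+1}-p^n)$, the same cancellation of the multiplier term via the constraint, and the same exact telescoping identities for the gradients and for the discretized potential carried over from the Neumann computation, yielding $E(u^{n+1})-E(u^n)=-\tau\sigma\|\nabla w^{n+1/2}\|^2-\tau^{-1}\|p^{n+1}-p^n\|_\Gamma^2\le 0$. The only cosmetic slip is attributing the midpoint identity $2\mathfrak{a}(v^{n+1/2},v^{n+1}-v^n)=\|v^{n+1}\|_\mathfrak{a}^2-\|v^n\|_\mathfrak{a}^2$ to a displayed formula (the paper states the variant with the extra $\|u-v\|_\mathfrak{a}^2$ term in Section~\ref{sec:firstOrder}), but it follows immediately by symmetry and bilinearity and is exactly what the paper uses implicitly.
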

\begin{proof}
By~\eqref{eqn:secondOrder:AllenCahn:d} we know that~$p^n = u^n|_\Gamma$ for all $n\ge 0$. Now consider the sum of equations~\eqref{eqn:secondOrder:AllenCahn:a}-\eqref{eqn:secondOrder:AllenCahn:c} with test functions~$w^{n+1/2}$, $u^{n+1}-u^n$, and~$\frac1\tau(p^{n+1}-p^n) = \frac 1\tau (u^{n+1}-u^n)|_\Gamma$, respectively. With the equalities obtained in the Neumann-case, this gives 
\begin{align*}
  &\frac\eps2\, \|\nabla u^{n+1}\|^2 - \frac\eps2\, \|\nabla u^n \|^2
  + \frac{1}{\eps}\, \int_\Omega W(u^{n+1}) - W(u^n) \dx + \tau\sigma\, \| \nabla w^{n+1/2} \|^2 \\*
  &+ \frac{\delta\kappa}{2}\, \|\nabla_\Gamma p^{n+1}\|_\Gamma^2 - \frac{\delta\kappa}{2}\, \|\nabla_\Gamma p^n \|_\Gamma^2
  + \frac{1}{\delta}\, \int_\Omega W_\Gamma(p^{n+1}) - W_\Gamma(p^n) \dx
  + \frac1\tau\, \| p^{n+1} - p^n \|^2
  = 0. 
\end{align*}
Hence, we have that 
\[
  E(u^{n+1}) - E(u^{n})
  = - \tau\sigma\, \| \nabla w^{n+1/2} \|^2 - \tau^{-1} \| p^{n+1} - p^n \|^2
  \le 0,
\]
which yields the claim. 	
\end{proof}
\begin{remark}[temporal refinement]
As discussed in Section~\ref{sec:firstOrder:AllenCahn} for the first-order scheme, it is possible to replace equation~\eqref{eqn:secondOrder:AllenCahn:c} by a refined temporal discretization on the boundary. 
This modification maintains the energy-dissipation property of Proposition~\ref{prop:secondOrder:AllenCahn} but, in general, reduces the convergence order to one. 
\end{remark}
\begin{remark}[spatial refinement]
The presented decoupled formulation with the additional variable $p$ on the boundary allows to use different discretization schemes in $\Omega$ and on $\Gamma$. In particular, one may consider a refinement of the spatial mesh used on the boundary, if the solution is, e.g., highly oscillatory. Note that this does not influence the convergence order in time. 
\end{remark}
Next, we turn to dynamic boundary conditions of Cahn--Hilliard type, starting with the model of Liu and Wu.
%
%
\subsection{Boundary conditions of Liu and Wu}\label{sec:secondOrder:LiuWu} 
In this section, we consider the Crank--Nicolson type scheme applied to system~\eqref{eqn:opEqn:LiuWu}. Given consistent initial data $u^0$, $p^0$ and $w^0$, $w_\Gamma^0$ from Remark~\ref{rem:w0}, the resulting scheme reads 
\begin{subequations}
\label{eqn:secondOrder:LiuWu}
\begin{alignat}{3}
	u^{n+1} + \tau\, \sigma\calK_\Omega w^{n+1/2} 
	&= u^n &&\qquad \text{in } \calVw^\ast, \label{eqn:secondOrder:LiuWu:a}\\
	\eps\, \calK_\Omega u^{n+1/2} + \eps^{-1} \big(\tfrac{|u^{n+1}|^2+|u^n|^2}{2} - 1\big)\, u^{n+1/2} - \eps\, \calB^\ast \lambda^{n+1} 
	&= w^{n+1/2} &&\qquad \text{in } \calVu^\ast, \label{eqn:secondOrder:LiuWu:b}\\
	p^{n+1} + \tau\, \calK_\Gamma w_\Gamma^{n+1/2} 
	&= p^n &&\qquad \text{in } \calVp^\ast, \label{eqn:secondOrder:LiuWu:c}\\
	\delta \kappa\, \calK_\Gamma p^{n+1/2} + \delta^{-1} \big(\tfrac{|p^{n+1}|^2+|p^n|^2}{2} - 1\big)\, p^{n+1/2} + \eps\,  \lambda^{n+1} 
	&= w_\Gamma^{n+1/2} &&\qquad \text{in } \calVp^\ast, \label{eqn:secondOrder:LiuWu:d}\\
	\calB u^{n+1} - p^{n+1}
	&= 0 &&\qquad \text{in } \calQ^\ast,
\end{alignat}
\end{subequations}
where the last equation may again be replaced by $\calB u^{n+1/2} - p^{n+1/2} = 0$. 
This scheme satisfies the following dissipation result. 
\begin{proposition}
\label{prop:secondOrder:LiuWu}
Under the assumption of consistent initial data, i.e., $p^0 = u^0|_\Gamma$, the scheme~\eqref{eqn:secondOrder:LiuWu} is energy-dissipative. 
\end{proposition}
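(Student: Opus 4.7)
The plan is to mimic closely the proof of Proposition~\ref{prop:secondOrder:AllenCahn}, exploiting that the main difference of the Liu--Wu scheme is the additional boundary chemical potential~$w_\Gamma$ which splits the role of $\dot p$ between a mobility-type equation and a constitutive one. The key structural observation is that, as in the first-order case, the Lagrange-multiplier term and the two chemical-potential couplings will cancel pairwise when the four scheme equations are suitably tested and added.

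First I would argue, using~\eqref{eqn:secondOrder:LiuWu:c}-type reasoning on the constraint, that consistency is inherited: since $p^0 = u^0|_\Gamma$ and the discrete constraint enforces $\calB u^{n+1}-p^{n+1}=0$ at every step, one has $\calB(u^{n+1}-u^n)=p^{n+1}-p^n$ for all $n\ge 0$. Then I would test~\eqref{eqn:secondOrder:LiuWu:a} with $w^{n+1/2}$, \eqref{eqn:secondOrder:LiuWu:b} with $u^{n+1}-u^n$, \eqref{eqn:secondOrder:LiuWu:c} with $w_\Gamma^{n+1/2}$, and~\eqref{eqn:secondOrder:LiuWu:d} with $p^{n+1}-p^n$, and add the four resulting identities. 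The terms $\langle w^{n+1/2},u^{n+1}-u^n\rangle$ cancel between the first two lines and $\langle w_\Gamma^{n+1/2},p^{n+1}-p^n\rangle$ between the last two, while the two multiplier contributions $\eps\langle\lambda^{n+1},\calB(u^{n+1}-u^n)\rangle$ and $\eps\langle\lambda^{n+1},p^{n+1}-p^n\rangle$ cancel thanks to the consistency identity.

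Next I would reuse the algebraic computations already carried out in the Neumann discussion of~\eqref{eqn:secondOrder:fullCN}: the bilinear-form identity $2\,\mathfrak{a}(u^{n+1/2},u^{n+1}-u^n)=\|u^{n+1}\|_\mathfrak{a}^2-\|u^n\|_\mathfrak{a}^2$ applied to $\eps\,\calK_\Omega$ and to $\delta\kappa\,\calK_\Gamma$ yields the two gradient increments, and the crucial telescoping
\[
\big(\tfrac{|u^{n+1}|^2+|u^n|^2}{2}-1\big) u^{n+1/2}\,(u^{n+1}-u^n)=W(u^{n+1})-W(u^n),
\]
valid pointwise for the polynomial double-well, together with its analogue for $W_\Gamma$, converts the nonlinear terms into exact energy differences. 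Combining everything leaves only the two non-negative dissipation contributions, so I expect to arrive at
\[
E(u^{n+1})-E(u^{n}) = -\tau\sigma\,\|\nabla w^{n+1/2}\|^{2}-\tau\,\|\nabla_\Gamma w_\Gamma^{n+1/2}\|_\Gamma^{2}\le 0,
\]
which is the desired inequality.

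I do not anticipate a genuine obstacle: the only subtlety is bookkeeping, making sure that the multiplier $\lambda^{n+1}$ is tested against matching increments on both sides of the trace (which is exactly what the consistency of the constraint guarantees) and that the nonlinear identity is applied in $L^1(\Omega)$ and $L^1(\Gamma)$ rather than merely pointwise. Once these are in place, the proof is essentially a line-by-line transcription of the Allen--Cahn argument, with the single dissipation term $\tau^{-1}\|p^{n+1}-p^n\|_\Gamma^2$ replaced by $\tau\,\|\nabla_\Gamma w_\Gamma^{n+1/2}\|_\Gamma^2$.
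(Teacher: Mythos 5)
Your proposal is correct and coincides with the paper's own proof: the same four test functions $w^{n+1/2}$, $u^{n+1}-u^n$, $w_\Gamma^{n+1/2}$, $p^{n+1}-p^n=(u^{n+1}-u^n)|_\Gamma$, the same cancellation of the chemical-potential and $\lambda$-terms via the constraint, and the same reuse of the Neumann-case identities, yielding $E(u^{n+1})-E(u^n)=-\tau\sigma\,\|\nabla w^{n+1/2}\|^2-\tau\,\|\nabla_\Gamma w_\Gamma^{n+1/2}\|_\Gamma^2\le 0$. No gaps; your bookkeeping remarks (consistency propagating through the discrete constraint, the telescoping of the nonlinearity) are exactly the implicit steps in the paper's argument.
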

\begin{proof}
We proceed similarly as in the previous proof and consider the sum of equations~\eqref{eqn:secondOrder:LiuWu:a}-\eqref{eqn:secondOrder:LiuWu:d} with test functions~$w^{n+1/2}$, $u^{n+1}-u^n$, $w_\Gamma^{n+1/2}$, and~$p^{n+1}-p^n = (u^{n+1}-u^n)|_\Gamma$, respectively. This leads to 
\begin{align*}
	&\frac\eps2\, \|\nabla u^{n+1}\|^2 - \frac\eps2\, \|\nabla u^n \|^2
	+ \frac{1}{\eps}\, \int_\Omega W(u^{n+1}) - W(u^n) \dx + \tau\sigma\, \| \nabla w^{n+1/2} \|^2 \\
	&+ \frac{\delta\kappa}{2}\, \|\nabla_\Gamma p^{n+1}\|_\Gamma^2 - \frac{\delta\kappa}{2}\, \|\nabla_\Gamma p^n \|_\Gamma^2
	+ \frac{1}{\delta}\, \int_\Omega W_\Gamma(p^{n+1}) - W_\Gamma(p^n) \dx + \tau\, \| \nabla_\Gamma w_\Gamma^{n+1/2} \|_\Gamma^2
	= 0, 
\end{align*}
which directly gives $E(u^{n}) \ge E(u^{n+1})$.
\end{proof}
As in the previous model, the PDAE-based formulation allows a refined spatial discretization of the boundary. The possible gain in accuracy is illustrated numerically in Section~\ref{sec:secondOrder:numerics}. 
%
%
\subsection{Boundary conditions of Goldstein, Miranville, and Schimpera}\label{sec:secondOrder:Goldstein} 
The Crank--Nicolson type scheme applied to~\eqref{eqn:opEqn:Goldstein} yields 
\begin{subequations}
\label{eqn:secondOrder:GMS}
\begin{alignat}{3}
	u^{n+1} + \tau\,\sigma \calK_\Omega w^{n+1/2} - \tau\, \calB^\ast \mu^{n+1} 
	&= u^n &&\qquad \text{in } \calVw^\ast, \label{eqn:secondOrder:GMS:a}\\
	\eps\, \calK_\Omega u^{n+1/2} + \eps^{-1} \big(\tfrac{|u^{n+1}|^2+|u^n|^2}{2} - 1\big)\, u^{n+1/2} - \eps\, \calB^\ast \lambda^{n+1} 
	&= w^{n+1/2} &&\qquad \text{in } \calVu^\ast, \label{eqn:secondOrder:GMS:b}\\
	p^{n+1} + \tau\, \calK_\Gamma r^{n+1/2} + \tau \mu^{n+1}
	&= p^n &&\qquad \text{in } \calVp^\ast, \label{eqn:secondOrder:GMS:c}\\
	\delta \kappa\, \calK_\Gamma p^{n+1/2} + \delta^{-1} \big(\tfrac{|p^{n+1}|^2+|p^n|^2}{2} - 1\big)\, p^{n+1/2} + \eps\,  \lambda^{n+1} 
	&= r^{n+1/2} &&\qquad \text{in } \calVp^\ast, \label{eqn:secondOrder:GMS:d}\\
	\calB u^{n+1} - p^{n+1}
	&= 0 &&\qquad \text{in } \calQ^\ast \label{eqn:secondOrder:GMS:e} \\
	\calB w^{n+1} - r^{n+1}
	&= 0 &&\qquad \text{in } \calQ^\ast. \label{eqn:secondOrder:GMS:f}
\end{alignat}
\end{subequations}
Besides the initial data $u^0$ and $p^0$, this scheme also needs values~$w^0$ and $r^0$, cf.~Remark~\ref{rem:w0}. Once more, we discuss the dissipation of energy of the introduced scheme. 
\begin{proposition}
\label{prop:secondOrder:GMS}
Under the assumption of consistent initial data, i.e., $p^0 = u^0|_\Gamma$ and~$r^0 = w^0|_\Gamma$, the scheme~\eqref{eqn:secondOrder:GMS} is energy-dissipative. 
\end{proposition}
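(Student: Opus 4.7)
The plan is to mirror the arguments used for Propositions~\ref{prop:secondOrder:AllenCahn} and \ref{prop:secondOrder:LiuWu}. First, I would propagate consistency of the initial data to all time levels: the constraint~\eqref{eqn:secondOrder:GMS:e} together with $p^0 = u^0|_\Gamma$ yields $p^n = \calB u^n$ and hence $p^{n+1}-p^n = \calB(u^{n+1}-u^n)$; similarly~\eqref{eqn:secondOrder:GMS:f} together with $r^0 = w^0|_\Gamma$ yields $r^n = \calB w^n$, and by linearity also $r^{n+1/2} = \calB w^{n+1/2}$. These two identities are what make the hidden dual pairings between the bulk and the surface equations compatible.

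Next, I would test~\eqref{eqn:secondOrder:GMS:a}--\eqref{eqn:secondOrder:GMS:d} with $w^{n+1/2}$, $u^{n+1}-u^n$, $r^{n+1/2}$, and $p^{n+1}-p^n$, respectively, and take the sum. The two algebraic identities already exploited in the Neumann discussion -- namely $2\,\mathfrak{a}(v^{n+1/2}, v^{n+1}-v^n) = \|v^{n+1}\|_\mathfrak{a}^2-\|v^n\|_\mathfrak{a}^2$ for any symmetric bilinear form $\mathfrak{a}$, and the polynomial identity which turns $\bigl(\tfrac{|v^{n+1}|^2+|v^n|^2}{2}-1\bigr)\,v^{n+1/2}\,(v^{n+1}-v^n)$ into $W(v^{n+1})-W(v^n)$ -- produce the increments $E_{\text{bulk}}(u^{n+1})-E_{\text{bulk}}(u^n)$ from~\eqref{eqn:secondOrder:GMS:b} and $E_{\text{surf}}(p^{n+1})-E_{\text{surf}}(p^n)$ from~\eqref{eqn:secondOrder:GMS:d}.

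The cross terms $\langle u^{n+1}-u^n, w^{n+1/2}\rangle$ and $\langle p^{n+1}-p^n, r^{n+1/2}\rangle$ cancel pairwise between~\eqref{eqn:secondOrder:GMS:a}/\eqref{eqn:secondOrder:GMS:b} and~\eqref{eqn:secondOrder:GMS:c}/\eqref{eqn:secondOrder:GMS:d}. The multiplier contributions cancel via the two constraint identities derived above: $-\tau\langle \calB^*\mu^{n+1}, w^{n+1/2}\rangle$ from~\eqref{eqn:secondOrder:GMS:a} combines with $+\tau\langle \mu^{n+1}, r^{n+1/2}\rangle$ from~\eqref{eqn:secondOrder:GMS:c} by $\calB w^{n+1/2} = r^{n+1/2}$, while $-\eps\langle \calB^*\lambda^{n+1}, u^{n+1}-u^n\rangle$ from~\eqref{eqn:secondOrder:GMS:b} combines with $+\eps\langle \lambda^{n+1}, p^{n+1}-p^n\rangle$ from~\eqref{eqn:secondOrder:GMS:d} by $\calB(u^{n+1}-u^n) = p^{n+1}-p^n$. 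What remains is the discrete energy identity $E(u^{n+1}) - E(u^n) + \tau\sigma\,\|\nabla w^{n+1/2}\|^2 + \tau\,\|\nabla_\Gamma r^{n+1/2}\|_\Gamma^2 = 0$, which immediately implies the claim. I expect no serious obstacle; the only delicate point compared with the Liu--Wu case is that now \emph{two} Lagrange multipliers have to be eliminated simultaneously, which is precisely why the scheme requires the additional initial consistency $r^0 = w^0|_\Gamma$ together with $p^0 = u^0|_\Gamma$.
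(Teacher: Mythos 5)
Your proposal is correct and follows essentially the same route as the paper's proof: identical test functions $w^{n+1/2}$, $u^{n+1}-u^n$, $r^{n+1/2}$, $p^{n+1}-p^n$, the same discrete energy identities carried over from the Neumann case, and the same elimination of the two multiplier terms via the constraints~\eqref{eqn:secondOrder:GMS:e}--\eqref{eqn:secondOrder:GMS:f} together with the initial consistency. If anything, you are slightly more explicit than the paper in noting that $r^{n+1/2}=\calB w^{n+1/2}$ follows by linearity from $r^n=\calB w^n$ at all levels (which for $n=0$ requires the assumption $r^0 = w^0|_\Gamma$), a point the paper's proof uses implicitly.
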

\begin{proof}
To prove the dissipation of energy, we consider the sum of equations~\eqref{eqn:secondOrder:GMS:a}-\eqref{eqn:secondOrder:GMS:d} with test functions~$w^{n+1/2}$, $u^{n+1}-u^n$, $r^{n+1/2}$, and~$p^{n+1}-p^n = (u^{n+1}-u^n)|_\Gamma$, respectively. This gives
\begin{align*}
\frac\eps2\, &\|\nabla u^{n+1}\|^2 - \frac\eps2\, \|\nabla u^n \|^2
+ \frac{1}{\eps}\, \int_\Omega W(u^{n+1}) - W(u^n) \dx + \tau\sigma\, \| \nabla w^{n+1/2} \|^2 \\
&+ \frac{\delta\kappa}{2}\, \|\nabla_\Gamma p^{n+1}\|_\Gamma^2 - \frac{\delta\kappa}{2}\, \|\nabla_\Gamma p^n \|_\Gamma^2
+ \frac{1}{\delta}\, \int_\Omega W_\Gamma(p^{n+1}) - W_\Gamma(p^n) \dx + \tau\, \| \nabla_\Gamma r^{n+1/2} \|_\Gamma^2 \\
&+ \eps\, \big\langle \lambda^{n+1}, (p^{n+1}-p^n)-(u^{n+1}-u^n)|_\Gamma \big\rangle_\Gamma
+ \tau\, \big\langle \mu^{n+1}, r^{n+1/2} - w^{n+1/2}|_\Gamma \big\rangle_\Gamma
= 0. 
\end{align*}
By~\eqref{eqn:secondOrder:GMS:e} and the assumed consistency of the initial data, we have $p^n = u^n|_\Gamma$ for all $n\ge0$ such that the $\lambda$-term vanishes. Due to~\eqref{eqn:secondOrder:GMS:f} also the $\mu$-term vanishes, which directly results in~$E(u^{n}) \ge E(u^{n+1})$.
\end{proof}
We close this section on energy-dissipative second-order schemes with a numerical experiment of a two-dimensional model problem with dynamic boundary conditions of Liu--Wu type. 
%
%
\subsection{Numerical experiment}\label{sec:secondOrder:numerics} 
In this final example, we illustrate the positive effect of a spatial refinement of the boundary for the approximation of the variable~$p$. For this, we consider the same system as in Section~\ref{sec:firstOrder:numerics:CH} and fix the spatial mesh used in the bulk (for~$u$). 
Additional refinements of the spatial boundary mesh lead to a decrease of the errors in~$p$ and also slightly in~$u$, cf.~the spatial discretization error in Figure~\ref{fig:error_temp_4_refine_p_L2_H1}. Independent of the spatial meshes, the energy dissipates as predicted by the theory. 

We analyze the convergence order in time for different spatial refinements of~$\Gamma$ (with mesh sizes~$h_\Gamma = 2^{-4},\dots, 2^{-6}$) and a fixed mesh used for~$\Omega$ (with mesh size~$h=2^{-4}$). The errors compared with a reference solution computed on a finer mesh are measured in the $L^2(0,T;H^1(\Gamma))$-norm and illustrated in Figure~\ref{fig:error_temp_4_refine_p_L2_H1}. The plateaus, which are reached by the solid lines, indicate the spatial errors. In contrast, the dashed lines show the pure temporal errors, i.e., the errors based on a reference solution computed on the same spatial mesh. As claimed, one observes that the refinement of the boundary does not influence the convergence in time.
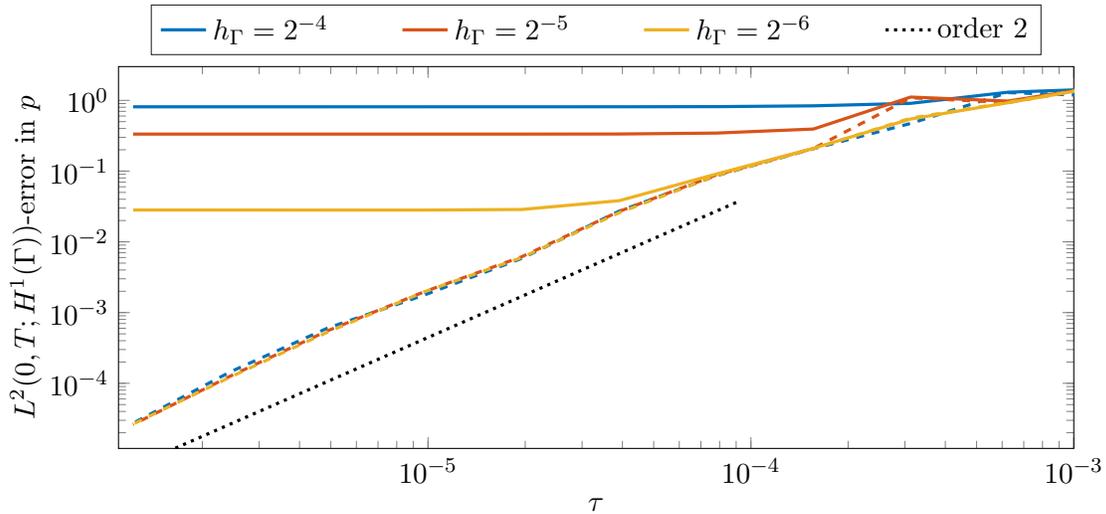
\begin{figure}
%
%
\definecolor{mycolor1}{rgb}{0.00000,0.44700,0.74100}%
\definecolor{mycolor2}{rgb}{0.85000,0.32500,0.09800}%
\definecolor{mycolor3}{rgb}{0.92900,0.69400,0.12500}%
\definecolor{mycolor4}{rgb}{0.49400,0.18400,0.55600}%
\begin{tikzpicture}

\begin{axis}[%
width=5.0in,
height=2.0in,
at={(2.6in,1.27in)},
scale only axis,
xmode=log,
xmin=1.1e-06,
xmax=1e-03,
xlabel = $\tau$,
xminorticks=true,
ymode=log,
ymin=1.2e-05,
ymax=3,
yminorticks=true,
ylabel = {$L^2(0,T;H^1(\Gamma))$-error in $p$},
legend columns = 4,
legend style={legend cell align=left, align=left, at={(0.5,1.03)}, anchor=south, draw=white!15!black}
]

\addplot [color=mycolor1, very thick]
  table[row sep=crcr]{%
0.01	1.18523849239025\\
0.005	1.34734014179076\\
0.0025	1.43050306371158\\
0.00125	1.45827115979738\\
0.000625	1.30219220266616\\
0.0003125	0.907112838517144\\
0.00015625	0.836591526020445\\
7.8125e-05	0.814809691633852\\
3.90625e-05	0.811315306102901\\
1.953125e-05	0.810760258337413\\
9.765625e-06	0.810646866472077\\
4.8828125e-06	0.810617882704352\\
2.44140625e-06	0.810610775070367\\
1.220703125e-06	0.810609010493087\\
};
\addlegendentry{$h_\Gamma = 2^{-4}$ \qquad}

\addplot [color=mycolor1, very thick, dashed, forget plot]
  table[row sep=crcr]{%
0.01	1.3302533312812\\
0.005	1.24626828391913\\
0.0025	1.36461793276171\\
0.00125	1.14941705122985\\
0.000625	1.28641478377724\\
0.0003125	0.471860722110348\\
0.00015625	0.207300778886348\\
7.8125e-05	0.0859330540428367\\
3.90625e-05	0.0271051708593371\\
1.953125e-05	0.00583987767991062\\
9.765625e-06	0.00177829665090621\\
4.8828125e-06	0.0006027435553441\\
2.44140625e-06	0.000145028517289842\\
1.220703125e-06	2.7263027269259e-05\\
6.103515625e-07	0\\
};
\addplot [color=mycolor2, very thick]
  table[row sep=crcr]{%
0.01	1.12538355713033\\
0.005	1.27114958384441\\
0.0025	1.34228300831108\\
0.00125	1.60030591823181\\
0.000625	0.976436919553895\\
0.0003125	1.11315143952546\\
0.00015625	0.393304834771184\\
7.8125e-05	0.344637376340653\\
3.90625e-05	0.334455091073655\\
1.953125e-05	0.333431563544735\\
9.765625e-06	0.333369894193265\\
4.8828125e-06	0.333365522923306\\
2.44140625e-06	0.333367325393214\\
1.220703125e-06	0.333368835366124\\
};
\addlegendentry{$h_\Gamma = 2^{-5}$ \qquad}

\addplot [color=mycolor2, very thick, dashed, forget plot]
  table[row sep=crcr]{%
0.01	1.11136822809007\\
0.005	1.2422711953971\\
0.0025	1.31261846979087\\
0.00125	1.58394967256894\\
0.000625	0.932115840419656\\
0.0003125	1.0859715554182\\
0.00015625	0.209058086819416\\
7.8125e-05	0.088081389252216\\
3.90625e-05	0.0268837506518392\\
1.953125e-05	0.00616322124922795\\
9.765625e-06	0.00197995016218578\\
4.8828125e-06	0.000551929282723509\\
2.44140625e-06	0.000126341192760982\\
1.220703125e-06	2.6163514733303e-05\\
6.103515625e-07	0\\
};
\addplot [color=mycolor3, very thick]
  table[row sep=crcr]{%
0.01	1.16842903664652\\
0.005	1.231547247522\\
0.0025	1.31524300095771\\
0.00125	1.59558036336666\\
0.000625	0.924908876746517\\
0.0003125	0.545024324383904\\
0.00015625	0.210104938296064\\
7.8125e-05	0.0907700355936098\\
3.90625e-05	0.0381107900214114\\
1.953125e-05	0.0286597976040959\\
9.765625e-06	0.0281085505606613\\
4.8828125e-06	0.0280994754544417\\
2.44140625e-06	0.0281372431749851\\
1.220703125e-06	0.0281621569411932\\
};
\addlegendentry{$h_\Gamma = 2^{-6}$ \qquad}

\addplot [color=mycolor3, very thick, dashed, forget plot]
  table[row sep=crcr]{%
0.01	1.18913530340572\\
0.005	1.23852535855475\\
0.0025	1.32253838170965\\
0.00125	1.60989864734733\\
0.000625	0.937072230797501\\
0.0003125	0.560424524569393\\
0.00015625	0.207740106204779\\
7.8125e-05	0.0864464636397689\\
3.90625e-05	0.025725888665608\\
1.953125e-05	0.00600501886388244\\
9.765625e-06	0.00197648935426864\\
4.8828125e-06	0.0005283338092537\\
2.44140625e-06	0.000122587140138945\\
1.220703125e-06	2.64985014106722e-05\\
6.103515625e-07	0\\
};

\addplot [color=black, very thick, dotted]
table[row sep=crcr]{%
	9e-05	3.6e-02\\
	1.5e-06	1e-05\\
};
\addlegendentry{order $2$}
\end{axis}

\end{tikzpicture}%
\caption{Errors in the $L^2(0,T;H^1(\Gamma))$-norm over the time step size~$\tau$ for different spatial refinements of the boundary~$\Gamma$. The dashed lines represent the pure temporal errors (without the spatial discretization error).}
\label{fig:error_temp_4_refine_p_L2_H1}
\end{figure}
%
%
\section{Conclusion}\label{sec:conclusion}
In this paper, we have presented PDAE formulations of the Cahn--Hilliard equation with different types of dynamic boundary conditions. Since this approach formally decouples bulk and surface dynamics, different discretizations -- in time and space -- can be chosen in the bulk and on the surface. This increase of flexibility is of particular value if the boundary requires a finer discretization, e.g, due to an oscillatory behaviour of the solution. The proposed time stepping schemes of first and second order preserve the properties of mass-conservation and energy-dissipation and, at the same time, allow a refined spatial discretization of the boundary. 
%
%
\bibliographystyle{alpha} 
\bibliography{../../bib_dynBC}

\newcommand{\etalchar}[1]{$^{#1}$}
\begin{thebibliography}{KEM{\etalchar{+}}01}

\bibitem[AKZ22]{AltKZ22}
R.~Altmann, B.~Kov\'{a}cs, and C.~Zimmer.
\newblock Bulk--surface {L}ie splitting for parabolic problems with dynamic
  boundary conditions.
\newblock {\em IMA J. Numer. Anal.}, (published online), 2022.

\bibitem[Alt15]{Alt15}
R.~Altmann.
\newblock {\em {R}egularization and {S}imulation of {C}onstrained {P}artial
  {D}ifferential {E}quations}.
\newblock Ph{D} thesis, Technische Universit{\"a}t Berlin, 2015.

\bibitem[Alt19]{Alt19}
R.~Altmann.
\newblock A {PDAE} formulation of parabolic problems with dynamic boundary
  conditions.
\newblock {\em Appl. Math. Lett.}, 90:202--208, 2019.

\bibitem[AV21]{AltV21}
R.~Altmann and B.~Verf\"u{}rth.
\newblock A multiscale method for heterogeneous bulk-surface coupling.
\newblock {\em Multiscale Model. Simul.}, 19(1):374--400, 2021.

\bibitem[Bes04]{Bes04}
C.~Besse.
\newblock A relaxation scheme for the nonlinear {S}chr{\"o}dinger equation.
\newblock {\em SIAM J. Numer. Anal.}, 42(3):934--952, 2004.

\bibitem[BZ21]{BaoZ21}
X.~Bao and H.~Zhang.
\newblock Numerical approximations and error analysis of the {C}ahn--{H}illiard
  equation with dynamic boundary conditions.
\newblock {\em Commun. Math. Sci.}, 19(3):663--685, 2021.

\bibitem[CFGK12]{CamGK12}
E.~Campillo-Funollet, G.~Grün, and F.~Klingbeil.
\newblock On modeling and simulation of electrokinetic phenomena in two-phase
  flow with general mass densities.
\newblock {\em SIAM J. Appl. Math.}, 72(6):1899--1925, 2012.

\bibitem[CH58]{CahH58}
J.~W. Cahn and J.~E. Hilliard.
\newblock Free energy of a nonuniform system. {I}. {I}nterfacial free energy.
\newblock {\em J. Chem. Phys.}, 28(2):258--267, 1958.

\bibitem[CP14]{CheP14}
L.~Cherfils and M.~Petcu.
\newblock A numerical analysis of the {C}ahn--{H}illiard equation with
  non-permeable walls.
\newblock {\em Numer. Math.}, 128:517--549, 2014.

\bibitem[CPP10]{ChePP10}
L.~Cherfils, M.~Petcu, and M.~Pierre.
\newblock A numerical analysis of the {C}ahn--{H}illiard equation with dynamic
  boundary conditions.
\newblock {\em Disc. Contin. Dyn. Syst.}, 27:1511--1533, 2010.

\bibitem[DN91]{DuN91}
Q.~Du and R.~A. Nicolaides.
\newblock Numerical analysis of a continuum model of phase transition.
\newblock {\em SIAM J. Numer. Anal.}, 28(5):1310--1322, 1991.

\bibitem[Ell89]{Ell89}
C.~M. Elliott.
\newblock The {C}ahn--{H}illiard model for the kinetics of phase separation.
\newblock In J.~F. Rodrigues, editor, {\em Mathematical Models for Phase Change
  Problems}, pages 35--73. Birkh{\"a}user Verlag, Basel, 1989.

\bibitem[ES86]{EllS86}
C.~M. Elliott and Z.~Songmu.
\newblock On the {C}ahn--{H}illiard equation.
\newblock {\em Arch. Rational Mech. Anal.}, 96(4):339--357, 1986.

\bibitem[Eyr98]{Eyr98}
D.~J. Eyre.
\newblock Unconditionally gradient stable time marching the {C}ahn--{H}illiard
  equation.
\newblock {\em MRS Proceedings}, 529:39--46, 1998.

\bibitem[GK20]{GarK20}
H.~Garcke and P.~Knopf.
\newblock Weak solutions of the {C}ahn--{H}illiard system with dynamic boundary
  conditions: {A} gradient flow approach.
\newblock {\em SIAM J. Math. Anal.}, 52(1):340--369, 2020.

\bibitem[GMS11]{GolMS11}
G.~R. Goldstein, A.~Miranville, and G.~Schimperna.
\newblock A {C}ahn–{H}illiard model in a domain with non-permeable walls.
\newblock {\em Physica D}, 240(8):754--766, 2011.

\bibitem[Gr{\"u}13]{Gru13}
G.~Gr{\"u}n.
\newblock On convergent schemes for diffuse interface models for two-phase flow
  of incompressible fluids with general mass densities.
\newblock {\em SIAM J. Numer. Anal.}, 51(6):3036--3061, 2013.

\bibitem[GT01]{GilT01}
D.~Gilbarg and N.~S. Trudinger.
\newblock {\em Elliptic Partial Differential Equations of Second Order}.
\newblock Springer-Verlag, Berlin, 2001.

\bibitem[GWW14]{GuaWW14}
Z.~Guan, C.~Wang, and S.~M. Wise.
\newblock A convergent convex splitting scheme for the periodic nonlocal
  {C}ahn--{H}illiard equation.
\newblock {\em Numer. Math.}, 128:377--406, 2014.

\bibitem[HK21]{HarK21}
P.~Harder and B.~Kovács.
\newblock Error estimates for the {C}ahn--{H}illiard equation with dynamic
  boundary conditions.
\newblock {\em IMA J. Numer. Anal.}, (published online), 2021.

\bibitem[HLR89]{HaiLR89}
E.~Hairer, C.~Lubich, and M.~Roche.
\newblock {\em The Numerical Solution of Differential-Algebraic Systems by
  {R}unge--{K}utta Methods}.
\newblock Springer-Verlag, Berlin, 1989.

\bibitem[HW96]{HaiW96}
E.~Hairer and G.~Wanner.
\newblock {\em Solving Ordinary Differential Equations {II}: Stiff and
  Differential-Algebraic Problems}.
\newblock Springer-Verlag, Berlin, second edition, 1996.

\bibitem[KEM{\etalchar{+}}01]{KenEMetal01}
R.~Kenzler, F.~Eurich, P.~Maass, B.~Rinn, J.~Schropp, E.~Bohl, and W.~Dietrich.
\newblock Phase separation in confined geometries: {S}olving the
  {C}ahn--{H}illiard equation with generic boundary conditions.
\newblock {\em Comp. Phys. Comm.}, 133:139--157, 2001.

\bibitem[KL17]{KovL17}
B.~Kov{\'a}cs and C.~Lubich.
\newblock Numerical analysis of parabolic problems with dynamic boundary
  conditions.
\newblock {\em IMA J. Numer. Anal.}, 37(1):1--39, 2017.

\bibitem[KLLM21]{KnoLLM21}
P.~Knopf, K.~F. Lam, C.~Liu, and S.~Metzger.
\newblock Phase-field dynamics with transfer of materials: The
  {C}ahn--{H}illiard equation with reaction rate dependent dynamic boundary
  conditions.
\newblock {\em ESAIM Math. Model. Numer. Anal.}, 55(1):229--282, 2021.

\bibitem[Li21]{Li21}
D.~Li.
\newblock Why large time-stepping methods for the {C}ahn--{H}illiard equation
  is stable.
\newblock ArXiv Preprint 2111.06189, ArXiv, 2021.

\bibitem[LMT13]{LamMT13}
R.~Lamour, R.~M{\"a}rz, and C.~Tischendorf.
\newblock {\em Differential-Algebraic Equations: A Projector Based Analysis}.
\newblock Springer-Verlag, Berlin, Heidelberg, 2013.

\bibitem[LQT22]{LiQT22}
D.~Li, C.~Quan, and T.~Tang.
\newblock Stability and convergence analysis for the implicit--explicit method
  to the {C}ahn--{H}illiard equation.
\newblock {\em Math. Comp.}, 91:785--809, 2022.

\bibitem[LW19]{LiuW19}
C.~Liu and H.~Wu.
\newblock An energetic variational approach for the {C}ahn–{H}illiard
  equation with dynamic boundary condition: {M}odel derivation and mathematical
  analysis.
\newblock {\em Arch. Rational Mech. Anal.}, 233:167--247, 2019.

\bibitem[YR03]{YulR03}
A.~L. Yuille and A.~Rangarajan.
\newblock The concave--convex procedure.
\newblock {\em Neural Comput.}, 15(4):915--936, 2003.

\end{thebibliography}
\end{document}